\setlist[enumerate]{leftmargin=1.5em}
\setlist[itemize]{leftmargin=1.5em}
\definecolor{green}{rgb}{0,0.8,0} 
\newtheorem{theorem}{Theorem}[section]
\newtheorem{thm}{Theorem}[section]
\newtheorem{corollary}[theorem]{Corollary}
\newtheorem{lemma}[theorem]{Lemma}
\newtheorem{proposition}[theorem]{Proposition}
\theoremstyle{definition}
\theoremstyle{remark}
\newtheorem{remark}[theorem]{Remark}
\numberwithin{equation}{section}
\newcommand{\nrm}[1]{\Vert#1\Vert}
\newcommand{\nnrm}[1]{{\vert\kern-0.25ex\vert\kern-0.25ex\vert #1 
		\vert\kern-0.25ex\vert\kern-0.25ex\vert}}
\newcommand{\supp}{{\mathrm{supp}}\,}
\newcommand{\lap}{\Delta}
\newcommand{\rd}{\partial}
\newcommand{\nb}{\nabla}
\newcommand{\alp}{\alpha}
\newcommand{\dlt}{\delta}
\newcommand{\varep}{\varepsilon}
\newcommand{\tht}{\theta}
\newcommand{\omg}{\omega}
\newcommand{\Omg}{\Omega}
\newcommand{\bbR}{\mathbb R}
\newcommand{\bbT}{\mathbb T}
\begin{document}
\bibliographystyle{plain}
 \title{
{Infinite growth in vorticity gradient of compactly supported \\
planar vorticity near  Lamb dipole}}
\author{Kyudong Choi\thanks{Department of Mathematical Sciences, Ulsan National Institute of Science and Technology. \newline
		E-mail: kchoi@unist.ac.kr} 
	\and In-Jee Jeong\thanks{Department of Mathematical Sciences and RIM, Seoul National University. \newline \qquad  E-mail: injee\_j@snu.ac.kr}
}

\date\today

\maketitle

\renewcommand{\thefootnote}{\fnsymbol{footnote}}

\footnotetext{\emph{Key words: 2D Euler equation, Lamb dipole, stability, gradient growth, large time behavior, particle trajectory} 
\qquad\qquad \emph{2010 AMS Mathematics Subject Classification:} 76B47, 35Q31 }

\renewcommand{\thefootnote}{\arabic{footnote}}

\begin{abstract}  
We prove linear in time filamentation for perturbations of the Lamb dipole, which is a traveling wave solution to the  incompressible Euler equations in $\bbR^2$. The main ingredient is a recent nonlinear orbital stability result by Abe--Choi. As a consequence, we obtain linear in time growth for the vorticity gradient for all times, for certain smooth and compactly supported initial vorticity in $\bbR^2$. The construction carries over to some generalized SQG equations.  
\end{abstract}\vspace{1cm}

\section{Introduction}
\subsection{Main results}

In this paper, we are concerned with dynamics of the two-dimensional incompressible Euler equations near \textit{Lamb's vortex dipole}. Recall that in terms of the vorticity, the 2D Euler equation is given by \begin{equation}\label{2d_euler}
	\begin{aligned}
		\partial_t \omega+u\cdot \nabla \omega=0,\quad  u&=\mathcal{K}[\omega]=K*\omega \quad \textrm{in}\quad \mathbb{R}^{2}\times (0,\infty), 
	\end{aligned}
\end{equation}
with the 2D Biot--Savart kernel $K(x)=(2\pi)^{-1}x^{\perp}|x|^{-2}$, $x^{\perp}=
(-x_2,x_1)$. The Lamb dipole {(or Chaplygin--Lamb dipole) is a traveling wave solution of the 2D Euler equations
introduced  by 
 H. Lamb \cite[p231]{Lamb} in 1906 and,  independently, by S. A. Chaplygin in 1903 \cite{Chap1903}, \cite{Chap07} 
  (see \cite{MV94} for related history).} This solution is explicitly given by \begin{equation}   \label{lamb}
	\omega_L=\begin{cases} 
		g(r)\sin\theta,\quad & r\leq 1,\\
		0 ,\quad & r>1,\end{cases}
\end{equation} in polar coordinates $x_1=r\cos\theta$ and $x_2=r\sin\theta$, where we set 
\begin{equation*}
	g(r):=\left(\frac{-2c_L}{J_0(c_L)}\right)J_1(c_L r).
\end{equation*} 
Here, $J_{m}(r)$ is the $m$-th order Bessel function of the first kind. The constant $c_L=3.8317\dots>0$ is the first (positive) zero point of $J_1$ and  $J_0(c_L)<0$.  Then, one can check that 
$$\omega(t,x)=\omega_L(x-W_Lte_{x_1}),\quad W_L:=1$$ is
an odd-symmetric (with respect to $x_1$-axis) solution of \eqref{2d_euler}, see Subsection \ref{subsec:Lamb} for details. 

\medskip

\noindent Our first main result is the following {dynamical} stability theorem for the Lamb dipole.  We shall denote $L^p:=L^p(\mathbb{R}^2_+)$, $\|f\|_{L^1\cap L^2}:=\|f\|_{L^1}+\|f\|_{L^2}$, and $\|x_2 f\|_{L^1}:=\int_{\mathbb{R}^2_+} x_2|f(x)|\,dx$.
\begin{thm}[Quantitative stability]\label{thm_lamb_lower} 
	(I) For   $\varepsilon>0$,      there exists $\delta >0$ such that the following holds. For $\omg_{0}$ satisfying \begin{itemize}
		\item $\omega_0\in L^{1}\cap {L^{\infty}}$ and $x_2\omega_{0}\in L^{1}$,
		\item $\omega_0\geq 0$ on $\mathbb{R}^2_+$ and $\omg_0$ is odd-symmetric with respect to $x_1$-axis, 
		\item $\left\|\omega_0-\omega_{L} \right\|_{L^1\cap L^2}+\left\|x_2(\omega_0-\omega_{L}) \right\|_{L^1}\leq \delta,$
	\end{itemize} there exists a function $\tau:[0,\infty)\to\mathbb{R}$ with $\tau(0)=0$  satisfying 
	\begin{align}\label{lamb_stab}
		\left\|\omega(t)-\omega_{L}(\cdot-\tau(t)e_{x_1}) \right\|_{L^1\cap L^2}+\left\|x_2(\omega(t)-\omega_{L}(\cdot-\tau(t)e_{x_1})) \right\|_{L^1} \leq \varepsilon,\quad  t\geq0, 
	\end{align}
	where $\omega(t)$ is the unique weak solution of \eqref{2d_euler} for the initial data $\omega_0$.
	
	(II)   There exists a constant $\varepsilon_0>0$  such that the following holds. For any $M>0$, there exists $C_M>0$ such that if $\omega_0$ satisfies
	$$\nrm{\omega_0}_{L^\infty}\leq M $$ in addition to the hypotheses from $(I)$ with some $\dlt=\dlt(\varep)$ and $0<\varepsilon<\varepsilon_0$, then the shift function $\tau$ from \eqref{lamb_stab} satisfies
	\begin{equation}\label{lamb_main_est} 
		|\tau(t) -W_Lt|\leq C_M(t+1)\varepsilon^{1/3},\quad t\geq0.
	\end{equation}  	
\end{thm}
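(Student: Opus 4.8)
The plan is to treat the two assertions separately. Part (I) I would simply deduce from the nonlinear orbital stability of the Lamb dipole of Abe--Choi: their concentration-compactness/variational argument, run for the conserved kinetic energy and vertical impulse under the sign condition $\omega_0\ge 0$ and oddness, already produces, for each small $\varepsilon$, a threshold $\delta(\varepsilon)$ and a shift function $\tau$ realizing \eqref{lamb_stab} in exactly the norms $L^1\cap L^2$ and the $x_2$-weighted $L^1$. So the real work is in (II), the estimate on the shift, and there I would track the impulse-weighted horizontal moment
\[
\mathcal J(t):=\int_{\mathbb{R}^2_+} x_1 x_2\,\omega(t,x)\,dx .
\]
Using $\partial_t\omega=-\nabla\cdot(u\omega)$, integrating by parts on $\mathbb{R}^2_+$ (no boundary term, since $x_1x_2$ vanishes on $\{x_2=0\}$), and symmetrizing the Biot--Savart kernel in $x\leftrightarrow y$ against the odd extension of $\omega(t)$, one obtains the identity
\[
\frac{d}{dt}\mathcal J(t)=\frac{1}{8\pi}\iint_{\mathbb{R}^2\times\mathbb{R}^2}\frac{(x_1-y_1)^2-(x_2-y_2)^2}{|x-y|^2}\,\omega(t,x)\,\omega(t,y)\,dx\,dy=:\mathcal R[\omega(t)].
\]

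The key structural facts are that $\mathcal R$ is a \emph{bounded} quadratic functional (the kernel is $\le 1$ in modulus, so $|\mathcal R[\omega(t)]|\lesssim\|\omega(t)\|_{L^1(\mathbb{R}^2_+)}^2$, which is $\lesssim 1$ with constant depending only on $M$ by conservation of the $L^1$-norm) and that it comes from a \emph{translation-invariant} symmetric bilinear form $B$. Hence $\mathcal R[\omega(t)]-\mathcal R[\omega_L(\cdot-\tau(t)e_{x_1})]=B\big(\omega(t)+\omega_L(\cdot-\tau(t)e_{x_1}),\,\omega(t)-\omega_L(\cdot-\tau(t)e_{x_1})\big)$, which is $\lesssim_M\varepsilon$ by part (I) (the second slot has $L^1$-norm $\le 2\varepsilon$); while $\mathcal R[\omega_L(\cdot-\tau(t)e_{x_1})]=\mathcal R[\omega_L]$ by translation invariance, and applying the same identity to the exact travelling wave $\omega_L(\cdot-W_Lt\,e_{x_1})$ shows $\mathcal R[\omega_L]=W_L I_L$, where $I_L:=\int_{\mathbb{R}^2_+}x_2\omega_L\,dx>0$. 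Integrating in time gives $\mathcal J(t)=\mathcal J(0)+W_LI_L\,t$ up to an error $\lesssim_M\varepsilon t$.

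On the other hand, since $\int_{\mathbb{R}^2_+}x_1x_2\,\omega_L\,dx=0$ (the $\theta$-integral of $\sin2\theta\sin\theta$ vanishes), translation gives the exact identity $\mathcal J(t)=\tau(t)I_L+\int_{\mathbb{R}^2_+}x_1x_2\,h_t\,dx$ with $h_t:=\omega(t)-\omega_L(\cdot-\tau(t)e_{x_1})$. I would bound the last integral by splitting at $|x_1-\tau(t)|=2$. On $\{|x_1-\tau(t)|\le 2\}$ it is $\lesssim(|\tau(t)|+2)\|x_2h_t\|_{L^1}\lesssim_M(1+t)\varepsilon$, using the crude bound $|\tau(t)|\lesssim_M 1+t$ that follows from $\|u(t)\|_{L^\infty}\lesssim_M1$. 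On $\{|x_1-\tau(t)|>2\}$ the Lamb part vanishes, so one must estimate $\int_{\{|x_1-\tau|>2\}}|x_1-\tau|\,x_2\,\omega(t)$ knowing only that the $x_2$-weighted mass of $\omega(t)$ there is $\le\varepsilon$; here I would combine conservation of $\|\omega(t)\|_{L^1}$ and of the impulse with $\|u(t)\|_{L^\infty}\lesssim_M1$ (finite speed of propagation: the bulk of $\omega(t)$ stays in a disc of radius $\lesssim_M1+t$ about $\tau(t)e_{x_1}$, and the $\le\varepsilon$ tail is no farther out) to get $\int|x_1-\tau(t)|^{3/2}x_2\,\omega(t)\lesssim_M(1+t)^{3/2}$, and then apply H\"older with exponents $(3,3/2)$ against the $\le\varepsilon$ tail mass to conclude this piece is $\lesssim_M(1+t)\varepsilon^{1/3}$. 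Combining with the previous paragraph, and observing that $\tau(0)=0$ makes $\mathcal J(0)=\int x_1x_2(\omega_0-\omega_L)$, which is $\lesssim_M\varepsilon^{1/3}$ by the same splitting at $t=0$, we get $\tau(t)I_L=W_LI_L\,t+O_M\big((1+t)\varepsilon^{1/3}\big)$, which is \eqref{lamb_main_est} after dividing by the fixed positive constant $I_L$.

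The main obstacle is precisely the far-field term in the last step. The Abe--Choi stability controls $\omega(t)-\omega_L(\cdot-\tau(t)e_{x_1})$ only in $L^1\cap L^2$ and in the $x_2$-weighted $L^1$, and gives nothing about $x_1$-weighted quantities, so the mere finiteness of $\mathcal J(t)$ and the negligibility of its tail have to be recovered from the Euler dynamics (finite speed of propagation of the effective support, conservation of mass and impulse) rather than from the stability estimate itself; the interpolation that balances the $\le\varepsilon$ tail mass against a polynomially-growing higher moment is exactly what degrades the exponent from $1$ to $1/3$. A secondary technical point is that the integrations by parts defining $\mathcal R[\omega(t)]$ and the moment identity require enough spatial decay of $\omega(t)$: this is automatic for the compactly supported data of the intended application, and in general can be arranged by an approximation argument.
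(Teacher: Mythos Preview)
Your approach is genuinely different from the paper's, and there is a real gap.

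\textbf{The gap.} The moment $\mathcal J(t)=\int_{\mathbb R^2_+}x_1x_2\,\omega(t)\,dx$ is not a priori finite under the stated hypotheses: the theorem assumes only $\omega_0\in L^1\cap L^\infty$ and $x_2\omega_0\in L^1$, with \emph{no} $x_1$--weighted condition. Your far-field claim ``the $\le\varepsilon$ tail is no farther out'' is exactly where this bites: nothing in the hypotheses prevents $\omega_0$ from having a piece of mass $\delta/2$ sitting at $x_1=10^{10}$, and that piece is transported, not absorbed, so $\int|x_1-\tau(t)|^{3/2}x_2\,\omega(t)$ can be huge (or infinite) uniformly in $t$. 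The ``approximation argument'' you invoke does not rescue this, because truncating $\omega_0$ to $\{|x_1|\le R\}$ makes your constants depend on $R$, and the limit $R\to\infty$ gives nothing. (Incidentally, if one \emph{does} assume compact support, the H\"older step you wrote works with any pair of conjugate exponents and yields $\varepsilon^{1-\eta}$ for every $\eta>0$; the specific exponent $1/3$ is not forced by your argument.) The authors are aware of the moment route: they remark explicitly in the introduction that the center--of--mass identity would make the proof easier under a support restriction $\{|x_1|\le K\}$, at the price of a constant depending on $K$.

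\textbf{What the paper does instead.} The paper's proof of (II) is Lagrangian and avoids $x_1$--moments entirely. It proceeds via a local--in--time bootstrap on the interval $[t_0,t_0+\alpha_0]$ (Proposition~\ref{prop:travel-speed_lamb}): one compares the flow map $\phi$ of the perturbed solution with the Lamb flow $\phi_L$ using Lipschitz continuity of $u_L$ and the velocity difference bound $\|u(t)-u_L(\cdot-\tau(t)e_{x_1})\|_{L^\infty}\lesssim_M\varepsilon^{1/2}$ (from $L^1$--stability and the $L^\infty$ bound), and then shows that the image of the half--disc under $\phi$ stays within an $O(\varepsilon^{1/3})$--enlarged half--disc. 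The exponent $1/3$ here has a precise geometric origin completely different from your H\"older step: it is the inverse of the cubic vanishing rate $\int_{\{x_1\ge 1-\kappa\}}\omega_L\gtrsim\kappa^3$ (Lemma~\ref{lem:lamb_error}), which is used to argue that if $\tau$ jumped by more than $\sim\varepsilon^{1/3}$, the $L^1$--distance to the shifted Lamb would exceed $\varepsilon$. Summing the local estimate over $\lfloor t/\alpha_0\rfloor$ steps gives \eqref{lamb_main_est} with $C_M$ depending only on $M$. An auxiliary $H^3$ assumption is used to run a continuity argument and then removed by mollification, with constants independent of the mollification scale.

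In short: your scheme would prove a weaker theorem (with an additional compact-support or $x_1$--moment hypothesis and a constant depending on it), and the paper's trajectory/bootstrap argument is precisely what is needed to get the statement as written.
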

We  do not claim any originality of
the orbital stability statement (I)   above, which  is just a minor modification of \cite[Theorem 1.1]{AC2019}. Instead, our contribution lies on the estimate \eqref{lamb_main_est} of the shift function $\tau$. The condition  $\omg_0 \in L^\infty$ in (I) of the above  was inserted simply to guarantee uniqueness of the solution. Based on the above stability result, we are able to prove linear in time \textit{filamentation} 
for perturbations of the Lamb dipole.  The precise statement will be understood during the proof of the theorem below on the linear in time growth of the gradient and support for smooth solutions to \eqref{2d_euler}.
\begin{thm}[{Instability:} gradient and support growth]\label{thm_fil_lamb}
	There exists a $C^{\infty}$--smooth and compactly supported vorticity $\omg_{0}$  in $\mathbb{R}^2$ such that $\omg_0$ is odd-symmetric with respect to $x_1$-axis,
$\omega_0\geq 0$ on $\mathbb{R}^2_+$, and the corresponding solution $\omega$ satisfies 
	\begin{equation*}
		\begin{split}
			\inf_{p\in[1,\infty]}\nrm{\nb\omg(t,\cdot)}_{L^{p}} \ge c_{0}t, \quad \nrm{\omg(t,\cdot)}_{C^{\alp} } \ge c_{\alp}t^{\alp}, \quad  \mathrm{diam}( \mathrm{supp} (\omg(t,\cdot)) \cap \mathbb{R}^2_+) \ge c_{0}t, \quad \forall t\geq t_0
		\end{split}
\end{equation*} for some universal constants $t_0, c_{0}>0$ and $c_{\alp}>0$ for any $0<\alp\le 1$.  \end{thm}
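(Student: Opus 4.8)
The plan is to use Theorem \ref{thm_lamb_lower} to produce a solution that stays uniformly close to a traveling Lamb dipole (in the weak norms $L^1 \cap L^2$ and the weighted $L^1$), but whose translation speed $\tau'(t)$ is forced to differ from $W_L = 1$ by a definite amount over long time intervals. The point is that the Lamb dipole is supported in a disk of radius $1$, so a genuine copy of $\omega_L$ translating at the right speed has support of diameter exactly $2$ at every time; if the actual solution $\omega(t)$ were to keep its support diameter bounded while staying $L^1$-close to $\omega_L(\cdot - \tau(t)e_{x_1})$, then by the conservation of the "impulse" $\int_{\mathbb{R}^2_+} x_2 \omega\, dx$ (which in the odd-symmetric setting is a conserved quantity controlling the horizontal center of vorticity) the center of mass would have to travel at speed essentially $W_L$ — but the estimate \eqref{lamb_main_est} only pins the shift to within $C_M(t+1)\varepsilon^{1/3}$, which is not enough to \emph{force} linear separation by itself. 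So the argument must instead be run in the other direction: first choose $\omega_0$ so that its support genuinely spreads (e.g., a smooth compactly supported bump that is $L^1\cap L^2$- and weighted-$L^1$-close to $\omega_L$ but has a tiny sliver of vorticity placed behind the dipole, which the dipole's velocity field cannot catch up to), establish the linear-in-time growth of $\mathrm{diam}(\mathrm{supp}\,\omega(t)\cap\mathbb{R}^2_+)$ directly from a particle trajectory argument, and then \emph{deduce} the gradient and $C^\alpha$ lower bounds from the combination of support growth and the stability of the weak norms.

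Concretely, I would proceed as follows. \emph{Step 1 (Choice of data).} Fix $\varepsilon_0$ from Theorem \ref{thm_lamb_lower}(II) and pick $\varepsilon$ small. Let $\omega_0$ be a $C^\infty$, compactly supported, odd-symmetric, nonnegative-on-$\mathbb{R}^2_+$ vorticity with $\|\omega_0 - \omega_L\|_{L^1\cap L^2} + \|x_2(\omega_0-\omega_L)\|_{L^1} \le \delta(\varepsilon)$ and $\|\omega_0\|_{L^\infty}\le M$ for a fixed $M$; in particular $\omega_0 = \omega_L$ is approximated, but we retain an arbitrarily small but fixed portion of the mass of $\omega_0$ sitting, say, strictly to the left of the line $\{x_1 = -1\}$ inside $\mathbb{R}^2_+$. \emph{Step 2 (Trailing particle).} Track a Lagrangian trajectory $\Phi_t(a)$ starting at such a left-trailing point $a$. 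Using that $\omega(t)$ is $\varepsilon$-close in $L^1$ to $\omega_L(\cdot - \tau(t)e_{x_1})$ with $|\tau(t) - t| \le C_M(t+1)\varepsilon^{1/3}$, and the explicit structure of the Lamb dipole's velocity field (which is $W_L e_{x_1} = e_{x_1}$ at spatial infinity and outside the disk is a small potential perturbation decaying like $|x|^{-1}$), I would show that the velocity at the trailing particle satisfies $u_1(t,\Phi_t(a)) \le W_L - \eta$ for some fixed $\eta>0$ as long as $\Phi_t(a)$ stays bounded distance behind the dipole core — this requires the stability to control the velocity field pointwise away from the support via the Biot–Savart law applied to $\omega(t) - \omega_L(\cdot - \tau(t)e_{x_1})$, whose $L^1 \cap L^2$ norm being small gives an $L^\infty_{loc}$ bound on the velocity difference (with a logarithmic loss that is harmless). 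Hence the front of the dipole (which moves at speed $\approx W_L$) separates from the trailing particle at a linear rate, giving $\mathrm{diam}(\mathrm{supp}\,\omega(t)\cap\mathbb{R}^2_+) \ge c_0 t$ for $t \ge t_0$. \emph{Step 3 (From support growth to gradient growth).} Since $\|\omega(t)\|_{L^1} = \|\omega_0\|_{L^1}$ and $\|\omega(t)\|_{L^\infty} = \|\omega_0\|_{L^\infty}$ are conserved while the support has diameter $\gtrsim t$, and the solution is $L^2$-close to a \emph{fixed-size} profile, one sees that $\omega(t)$ must develop thin filaments: there is a region of area $\gtrsim t$ on which $|\omega|$ is bounded below by a fixed constant (coming from the trailing mass being transported without changing its value), yet this region has one dimension $O(1)$ by the $L^2$ (hence area) bound on the bulk — so $\nabla\omega$ must be large. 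Quantitatively, I would argue: if $\mathrm{diam}(\mathrm{supp}\,\omega(t)) \ge c_0 t$ but $\|\omega(t)\|_{L^2} \le \|\omega_L\|_{L^2} + \varepsilon$, then the "thin part" of the support has measure $\lesssim 1$ transverse extent, so along a segment of length $\sim t$ the vorticity oscillates between $\sim 0$ and a fixed value, forcing $\|\nabla\omega(t)\|_{L^p} \gtrsim t$ for every $p$ (the $L^\infty$ bound controls the constant uniformly), and similarly $\|\omega(t)\|_{C^\alpha} \gtrsim t^\alpha$; the infimum over $p\in[1,\infty]$ is then still $\gtrsim t$ because the $p=\infty$ gradient bound degrades only to a constant and the small-$p$ bound picks up the large measure.

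\emph{Main obstacle.} The crux is Step 2: extracting a \emph{quantitative} lower bound $u_1(t,\Phi_t(a)) \le W_L - \eta$ on the trailing particle's horizontal velocity purely from the weak-norm stability \eqref{lamb_stab}–\eqref{lamb_main_est}. The weak norms do not directly control $\nabla u$ or even $u$ pointwise without a loss; one must exploit that the velocity \emph{difference} $\mathcal{K}[\omega(t) - \omega_L(\cdot - \tau(t)e_{x_1})]$ is small in $L^\infty_{loc}$ (Biot–Savart maps $L^1\cap L^2 \to L^\infty$ up to a log, or one restricts attention to a region at fixed distance from both supports where the kernel is bounded) and that the exact Lamb velocity on and behind its rear stagnation point is strictly less than $W_L$ — this last fact is a concrete property of the explicit profile $g(r)\sin\theta$ that must be read off from the Bessel-function formula (equivalently from the known streamline geometry of the Chaplygin–Lamb dipole in the co-moving frame, where the dipole boundary is a closed streamline and the exterior flow is that of a translating disk). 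Ensuring the trailing particle never gets overtaken — i.e., a trapping/barrier argument in the moving frame showing the region behind the dipole is forward-invariant for the trailing trajectory — is the technically delicate piece, and is where the odd symmetry (keeping everything in $\mathbb{R}^2_+$ and preventing the trailing blob from being swept around the dipole) is essential.
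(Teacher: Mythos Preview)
Your Steps~1--2 are essentially the paper's argument (Proposition~\ref{prop:traj}): place a trailing point $z$ with $z_1<-2$, use $\nrm{u-u_L(\cdot-\tau(t)e_{x_1})}_{L^\infty}\le C\nrm{\omega-\omega_L^{\tau(t)}}_{L^1}^{1/2}\nrm{\omega-\omega_L^{\tau(t)}}_{L^\infty}^{1/2}\le C_M\varepsilon^{1/2}$ together with the explicit exterior field of $\omega_L$ and a bootstrap/barrier to show $\tau(t)-\phi^1(t,z)\gtrsim t$. Two small corrections here: in the lab frame the Lamb velocity decays to \emph{zero} at infinity (it is $r^{-2}\cos 2\theta$ outside the unit disk, see \eqref{eq:vel1}), not to $W_Le_{x_1}$; you are tacitly working in the co-moving frame. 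And Biot--Savart gives $L^\infty$ control from $L^1\cap L^\infty$ (not $L^1\cap L^2$ with a log), which is why the hypothesis $\nrm{\omega_0}_{L^\infty}\le M$ is needed in (II). Neither point breaks the argument.

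The genuine gap is Step~3. Growth of $\mathrm{diam}(\mathrm{supp}\,\omega(t))$ does \emph{not} by itself force gradient growth: if your trailing mass is a small blob disconnected from the main support, then far behind the dipole it sees a velocity gradient of order $|x-\tau(t)e_{x_1}|^{-3}$, whose time integral is finite, so the blob undergoes only bounded deformation and $\nrm{\nabla\omega(t)}_{L^\infty}$ can stay bounded while the support diameter grows linearly. Your sentence ``there is a region of area $\gtrsim t$ on which $|\omega|$ is bounded below'' is also false as written (area is conserved), and ``transverse extent $\lesssim 1$'' would only yield a bounded gradient.

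What the paper does instead is build into $\omega_0$ a \emph{connected} open set $A_1\supset\{\omega_L\ge\tfrac{3m}{4}\}$ with $\mathrm{area}(A_1)<10$, on which $\omega_0\ge m/2$ with equality exactly on $\partial A_1$, such that the sublevel $A_1\cap\{\omega_0>\tfrac{3m}{4}\}$ is connected and contains a point $z$ with $z_1<-2$. Then $X(t):=\phi(t,A_1)\cap\{\omega(t)>\tfrac{3m}{4}\}$ is connected, has area $<10$, contains $\phi(t,z)$, and---crucially---must also meet $\mathrm{supp}\,\omega_L(\cdot-\tau(t)e_{x_1})$ (otherwise $\nrm{\omega(t)-\omega_L^{\tau(t)}}_{L^1}\ge\tfrac{m}{2}\,\mathrm{area}(A_0)>\varepsilon$). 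Hence $\mathrm{diam}_{x_1}X(t)\gtrsim t$. Now Fubini on the connected set $\phi(t,A_1)$ gives a vertical line $\{x_1=\bar x_1\}$ meeting $X(t)$ on which $|\phi(t,A_1)\cap(\{\bar x_1\}\times\mathbb R_+)|\lesssim 1/t$; on that slice one finds $y$ with $\omega(t,y)\ge\tfrac{3m}{4}$ and $y'\in\phi(t,\partial A_1)$ with $\omega(t,y')=\tfrac{m}{2}$ at distance $\lesssim 1/t$, whence $|\partial_{x_2}\omega|\gtrsim t$ somewhere on $[y,y']$. The $L^p$ and $C^\alpha$ bounds follow because a set of $\bar x_1$'s of measure $\gtrsim t$ has this thin-slice property. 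So the missing idea in your proposal is precisely this level-set connectedness (items (i)--(iv) in the paper's construction) together with the stability-based argument that part of the stretched set must remain pinned to the moving dipole core.
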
 
From the literature, we are not aware of any results giving infinite growth of the vorticity gradient in the whole space  $\mathbb{R}^2$, with smooth and compactly supported initial data. Let us emphasize that if at least one of the following restrictions is lifted, there are previous works which show gradient growth for all times: (i) unbounded domain, (ii) $C^{\infty}_c$--data, and (iii) absence of physical boundaries. 
It is expected that the solution we construct actually satisfies $\nrm{\omg(t,\cdot)}_{W^{s,1}} \gtrsim_{s} t^{s}$ for any $0<s<1$, which would confirm in a precise sense that the $L^{\infty}$--norm is the strongest globally controlled quantity for smooth vorticities. 

\subsection{Extensions to active scalar equations}

The stability statement as well as the infinite-time linear growth of the gradient carries over to the case of certain \textit{active scalar} equations, which is known to possess traveling wave solutions similar to the Lamb dipole. Indeed, on $\bbR^{2}$, one can consider the following family of systems, which is often referred to as $\alp$--SQG equations: \begin{equation}\label{eq:alp-SQG}
	\left\{
	\begin{aligned}
	&\rd_t\tht + u\cdot\nb \tht = 0,	 \\
	&u = -\nb^\perp (-\lap)^{-\alp}\tht. 
	\end{aligned}
	\right.
\end{equation} Here, $\tht(t,\cdot):\bbR^2\rightarrow\bbR$ and the operator $ (-\lap)^{-\alp}$ is defined by the convolution against $C_\alp|\cdot|^{-(2-2\alp)}$ for some $C_\alp>0$. The case $\alp=1$ is simply the 2D Euler equations in vorticity form, and $\alp=\frac{1}{2}$ corresponds to the surface quasi-geostrophic (SQG) equation introduced in \cite{CMT1}. In the range $0 < \alp < 1$, local well-posedness is well known for smooth initial data (\cite{CCCGW}), but the problem of finite time singularity formation remains open. (However, see \cite{KYZ,KRYZ,GaPa} and references therein.) Existence and orbital stability of dipole structures are known for the entire range $0<\alp<1$, thanks to the very recent work \cite{CQZZ-stab}. Based on the stability statement, we can prove the following result on gradient growth. 
\begin{proposition}\label{prop:sqg}
	For $\frac12<\alp<1$, there exists a $C^{\infty}$--smooth and compactly supported data $\tht_{0}$ in $\bbR^{2}$ such that the associated local unique smooth solution  satisfies the growth \begin{equation*}
		\begin{split}
			\inf_{p\in[1,\infty]}\nrm{\nb\tht(t,\cdot)}_{L^{p}} \ge c_{0}t
		\end{split}
	\end{equation*} for all $t \in [0,T^*)$, where $T^* \in (0,\infty]$ is the lifespan of the solution with data $\tht_{0}$. 
\end{proposition}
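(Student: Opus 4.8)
\medskip
\noindent\textbf{Proof plan.}
The plan is to transplant the proof of Theorem~\ref{thm_fil_lamb} to the $\alp$-SQG system \eqref{eq:alp-SQG}, so I first isolate the three ingredients of the Euler argument that have to be revisited for $\tfrac12<\alp<1$. First, one needs a compactly supported traveling-wave solution $\tht_L^{(\alp)}$, with speed $W^{(\alp)}>0$ and support $\overline{B_1}$, odd across the $x_1$-axis and positive on $\bbR^2_+$, together with its orbital stability in the norm $\nrm{\cdot}_{L^1\cap L^2}+\nrm{x_2\,\cdot}_{L^1}$; this is precisely the content of \cite{CQZZ-stab}, valid on all of $0<\alp<1$. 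Second, one needs quantitative control of the shift, i.e.\ the analogue of \eqref{lamb_main_est}, namely $|\tau(t)-W^{(\alp)}t|\le C(t+1)\varep^{1/c}$. Third, one needs enough regularity of the flow to run the particle-trajectory part of the argument: the velocity $u=-\nb^\perp(-\lap)^{-\alp}\tht$ has a convolution kernel of size $|x|^{-(3-2\alp)}$, so a bounded compactly supported $\tht$ induces $u\in C^0\cap L^\infty$ exactly when $3-2\alp<2$, i.e.\ when $\alp>\tfrac12$; in this range $C^\infty_c$ data are locally well-posed by \cite{CCCGW}, which is the origin both of the finite lifespan $T^*$ and of the restriction $\alp>\tfrac12$ in the statement (the endpoint $\alp=1$ is Theorem~\ref{thm_fil_lamb} itself).

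\medskip
\noindent Granting these, the argument follows that of Theorem~\ref{thm_fil_lamb}. Fix $\varep>0$ small and let $\tht_0$ be a suitable smooth, compactly supported smoothing of $\tht_L^{(\alp)}$, odd across the $x_1$-axis and nonnegative on $\bbR^2_+$, chosen so that $\tht_0$ is $\dlt$-close to $\tht_L^{(\alp)}$ in the stability norm. On the interval of existence the stability estimate of \cite{CQZZ-stab} together with the shift bound then gives a $\tau(t)$ with $\tau(0)=0$ and
\[
\nrm{\tht(t)-\tht_L^{(\alp)}(\cdot-\tau(t)e_{x_1})}_{L^1\cap L^2}+\nrm{x_2\big(\tht(t)-\tht_L^{(\alp)}(\cdot-\tau(t)e_{x_1})\big)}_{L^1}\le\varep,\qquad |\tau(t)-W^{(\alp)}t|\le C(t+1)\varep^{1/c},
\]
so the bulk of $\tht(t)$ stays in a unit-scale tube moving at speed $\simeq W^{(\alp)}$. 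Since $\tht_0$, being in $C^\infty_c$, differs from the merely Lipschitz profile $\tht_L^{(\alp)}$, the trajectory analysis behind Theorem~\ref{thm_fil_lamb} -- now run with the $\alp$-SQG flow, using that $\alp>\tfrac12$ keeps the velocity field sufficiently regular -- shows that a fixed amount of the scalar is not re-entrained by the moving dipole; once shed it lies in the slowly evolving far field of the receding structure (here $3-2\alp>1$ makes this far field decay integrably along the dipole's path) and therefore lags behind at a positive rate. Consequently, for a suitably small level $s>0$ the superlevel set $E_s(t):=\{\tht(t)>s\}\cap\bbR^2_+$ -- which is the image of the connected set $E_s(0)$ under the area-preserving homeomorphism furnished by the flow, hence itself connected and of $t$-independent area -- contains two points at distance $\ageq t$.

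\medskip
\noindent The gradient bound now follows by a soft argument. A connected bounded open planar set of diameter $\ageq t$ has topological boundary of $\mathcal H^1$-measure $\ageq t$ (project onto a line realizing the diameter), and $\rd E_s(t)\sbeq\{\tht(t)=s\}$, so $\mathcal H^1(\{\tht(t)=s\})\ageq t$ for $s$ in a fixed interval $I$ of regular values of $\tht_0$. Using the coarea formula, H\"older's inequality on each level set, and the fact that $\int_{\{\tht(t)=s\}}|\nb\tht(t)|^{-1}\,d\mathcal H^1=-\tfrac{d}{ds}|E_s(0)|$ does not depend on $t$, one gets
\[
\nrm{\nb\tht(t)}_{L^p}^{p}\ \ageq\ \int_I\frac{\mathcal H^1(\{\tht(t)=s\})^{p}}{\big(-\tfrac{d}{ds}|E_s(0)|\big)^{p-1}}\,ds\ \ageq\ (c_0t)^p
\]
for every $p\in[1,\infty)$ with $c_0>0$ independent of $p$, and the case $p=\infty$ is the same computation; this yields the claimed growth on $[0,T^*)$.

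\medskip
\noindent The main obstacle is the second ingredient. \cite{CQZZ-stab} provides the orbital stability (the analogue of Theorem~\ref{thm_lamb_lower}(I)) but not the quantitative shift control, so \eqref{lamb_main_est} must be reproved by rerunning the argument of Theorem~\ref{thm_lamb_lower}(II) with the $\alp$-SQG Hamiltonian and impulse in place of the Euler ones; this requires checking that the conservation laws and the variational characterization of the dipole exploited there persist for $0<\alp<1$. A secondary difficulty is quantifying the ``not re-entrained'' trajectory statement with only the weaker modulus of continuity of the $\alp$-SQG velocity field available -- which is exactly where the hypothesis $\alp>\tfrac12$ enters -- while keeping the whole argument within estimates valid on the maximal existence interval $[0,T^*)$.
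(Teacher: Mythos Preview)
Your overall strategy matches the paper's: import existence and orbital stability of $\tht_L^{(\alp)}$ from \cite{CQZZ-stab}, upgrade to a quantitative shift estimate, run the trajectory argument to produce filamentation, and extract gradient growth. Your identification of where the restriction $\alp>\tfrac12$ enters (the $L^\infty$ velocity control from $L^1\cap L^2$ and $L^\infty$ data) is exactly the paper's.

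However, your diagnosis of the ``main obstacle'' is misplaced. The proof of the quantitative shift \eqref{lamb_main_est} (Proposition~\ref{prop:travel-speed_lamb}) does \emph{not} use the Hamiltonian, the impulse as a conserved quantity, or the variational characterization of the dipole; those tools belong to the proof of orbital stability (I), which \cite{CQZZ-stab} already supplies and which enters only as a black box. What the shift argument actually requires are: (a) the interpolation $\nrm{u-u_L^{(\alp)}}_{L^\infty}\lesssim\varep^{1/2}$ from $L^1\cap L^2$ closeness of $\tht$ together with the $L^\infty$ bound---this is your point 3; (b) Lipschitz continuity of $u_L^{(\alp)}$; and (c) a shell estimate $\int_{\{x_1\ge 1-\kappa\}}\tht_L^{(\alp)}\,dx\gtrsim\kappa^3$ analogous to Lemma~\ref{lem:lamb_error}. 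For (c) the paper remarks that one should verify $\tht_L^{(\alp)}=g^{(\alp)}(r)\sin\tht$ in polar coordinates with $(g^{(\alp)})'(1)<0$; this is the one structural fact about the dipole needed beyond what \cite{CQZZ-stab} explicitly states. With (a)--(c) in hand, the Gronwall/bootstrap of Proposition~\ref{prop:travel-speed_lamb} transfers verbatim, so there is nothing variational to redo.

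Your coarea-formula endgame for the gradient bound is different from the paper's, which instead uses Fubini: the connected filamented set has area $\lesssim 1$ and $x_1$-diameter $\gtrsim t$, so most vertical slices have one-dimensional measure $\lesssim 1/t$, and the mean value theorem on such a slice gives a point where $|\rd_{x_2}\tht|\gtrsim t$; the $\gtrsim t$ many such slices yield the $L^p$ growth. Your approach via $\mathcal H^1(\{\tht=s\})\gtrsim t$ and the $t$-independence of $\int_{\{\tht=s\}}|\nb\tht|^{-1}\,d\mathcal H^1$ is a legitimate alternative and handles all $L^p$ at once more uniformly.
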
 In this result, we cannot rule out the possibility of singularity formation, similarly as in the works \cite{KN,HeKi}. Moreover, note that unfortunately the SQG equation is excluded from the above. This is simply because the orbital stability holds in $L^p$--norms of $\tht$ with $p<\infty$, which is not sufficient to control the velocity in $L^{\infty}$ starting from the SQG case. We refer the interested reader to the introduction of \cite{HeKi} for a further discussion. It will be an interesting problem to obtain an instability similar to \ref{prop:sqg} in the range $0< \alp < \frac12$. 

\subsection{Previous works} While there is a vast literature on theoretical study of the two-dimensional Euler equations and more generally on active scalars, let us briefly review the works which are the most closely related to our main results. 

\medskip 

\noindent \textbf{Results for the Lamb dipole}. Being an explicit traveling wave solution for two dimensional incompressible flows, the Lamb dipole has served as a popular model of vortex motion. Indeed, vortex structures resembling Lamb dipole are frequently observed in laboratory experiments \cite{CoBa,FvH,BiCho}. Moreover, the Lamb dipole provides a nice benchmark problem for testing the stability of numerical schemes, since its vorticity has a highly non-trivial profile. Extensive numerical studies have been carried out for perturbations of the Lamb dipole, clearly showing filamentation (formation of long arms) with arbitrarily small initial perturbations (\cite{Niel1,Niel2,GeHe,KX}). Such an instability is rigorously confirmed in the current work. 

\medskip

\noindent \textbf{Gradient growth for active scalars}. Note that smooth solutions to the equations \eqref{2d_euler} and \eqref{eq:alp-SQG} conserve all $L^p$--norms in time $(1\le p \le\infty)$. Although it is a natural question to ask whether there exists a smooth solution experiencing large gradient growth, the problem has remained essentially open (except for linear growth results using physical boundaries \cite{Nad,Y3}) before the pioneering works of Denisov \cite{Den,Den2,Denisov-merging} and Kiselev--Nazarov \cite{KN}. The strategy in \cite{Den} gives superlinear in time gradient growth could be extended to a range of $\alp$--SQG equations. On the other hand, the result of \cite{KN} is applicable to all active scalar equations but the growth is in terms of some high Sobolev norm of $\tht$ and available only for a finite time horizon. Then, Kiselev--Sverak proved double exponential growth for the vorticity gradient in a disc (see also \cite{Xu}), which is the sharp rate. Using a similar configuration, Zlatos obtained exponential in time growth of $\nrm{\nb\omg(t,\cdot)}_{L^{\infty}}$ in $\bbT^2$ (\cite{Z}), but it is interesting to note that the initial vorticity is less smooth than $C^{1,1}(\bbT^2)$; that is, smoothness of the vorticity is an obstruction to exponential in time gradient growth. Very recently, He--Kiselev constructed smooth initial data experiencing exponential in time gradient growth for the SQG equation for $\bbT^{2}$ in \cite{HeKi}, and this proof could be extended to the entire range $0<\alp<1$. Note that in the aforementioned works, the physical domain was bounded. In the case of $\bbR^2$, obtaining the gradient growth is strictly harder; mainly, one needs to deal with the possibility of vorticity simply escaping to infinity with time. Our previous works on perturbations of the circular vortex patch (\cite{Choi2019,CJ,CJ_windin}) could be adapted to give gradient growth for smooth initial data close to the circular patch, but only for arbitrary large finite time horizons.

\medskip

\noindent \textbf{Support diameter growth for vortex patches}. A closely related question is whether there exists a \textit{vortex patch} solution for \eqref{2d_euler} which experiences growth in time of the diameter. By a vortex patch, we mean a solution to \eqref{2d_euler} which is of the form $\omg(t,x)=\mathbf{1}_{\Omg(t)}$, i.e. the indicator function of a moving domain $\Omg(t)$. It is a notoriously difficult open problem to construct such a solution satisfying $\mathrm{diam}(\Omg(t))\to\infty$ as $t\to \infty$, even when one allows the set $\Omg_{0}$ to be disconnected. On the other hand, there are such examples when the patch is allowed to have mixed sign (\cite{ISG99,Zb}); $\omg(t,x) = \sum_{i=1}^{N} a_i \mathbf{1}_{\Omg_i(t)}$ for some $a_i\in\bbR$. The work \cite{ISG99} used patches in $\bbR^2$ which are odd with respect to both axes and proved linear in time support growth. This work can be interpreted as a stability result for the odd-odd quadruple of point vortices, whose dynamics is explicitly solvable. A very recent breakthrough by Zbarsky \cite{Zb} achieved stability in the class containing patches for certain self-similar expanding configuration of three point vortices, in particular obtaining growth of the path diameter with order $t^{1/2}$. This is striking because in this configuration, the vorticity does not need to satisfy any symmetry condition. However, it is not clear to us whether such stability results from \cite{ISG99,Zb} are able to produce infinite in time growth of the vorticity gradient for nearby smooth solutions. For further results on small scale creation for patches, see \cite{Ki-sur,Ki-sur2,EJSVP1,EJSVP2,KRYZ,KiLi}.

\medskip

\noindent \textbf{Axisymmetric Euler equations and Hill's vortex}. The questions of vorticity gradient and support diameter growth could be asked for the incompressible three-dimensional Euler equations as well. In particular, one may consider the axisymmetric Euler equations without swirl, which share some structures with the two-dimensional Euler equations. While global regularity for smooth initial data is known (\textit{e.g. } see \cite{MB}), there are only a few results on the norm growth for smooth solutions; see references in \cite{CJ_Hill,Do}. In our companion work \cite{CJ_Hill} (based on orbital stability \cite{Choi2020}), we consider perturbations of the Hill's vortex, which is a traveling--wave solution to the axisymmetric Euler equation supported on the unit ball in $\bbR^3$. The Hill's vortex could be seen as the 3D analogue of the Lamb dipole. In \cite{CJ_Hill}, stability and instability of the Hill's vortex is established using a largely parallel argument as in this work. However, let us point out the main differences: the gradient growth in the current paper is strictly harder to achieve since we face the restriction that the vorticity must vanish on the symmetry axis $\{ x_{2} = 0 \}$, for the vorticity to be smooth.  {In addition, the Lamb dipole $\omega_L$ vanishes on the boundary of its supporting disk, which obligates us to prove a weaker  estimate (Lemma \ref{lem:lamb_error}).  }
On the other hand, in the case of Hill's vortex, the advected scalar (which is the axial vorticity divided by the distance to the symmetry axis) does not need to vanish on the axis. In this latter case, gradient growth is immediate from diameter growth along the axis. Moreover, the instability mechanism near the Hill's vortex only gives infinite time growth of the vorticity Hessian. \\

Lastly, we remark that the proof of \eqref{lamb_main_est} might become  easier when   we consider only the initial data $\omega_0$  supported in $\{|x_1|\leq K\}$ for any given  constant $K>0$ (even if the resulting estimate  depends on the parameter $Z_0$). Indeed, we can  use the identity on the the speed of the center of mass in $x_1$-direction:
	$$
	\frac{d}{dt}\int_{\mathbb{R}^2_+} x_1\omega (t,x)\,dx=\int_{\mathbb{R}^2_+} u^{x_1}(t,x)\omega (t,x)\,dx.
	$$   
Indeed, the center moves  linearly in the direction of the symmetry axis (see \cite[Sec. 5.1]{Iftimie_lec}).

{ \begin{remark}
		Recently, we have learned about a new and general Lagrangian stability result for 2D Euler from T. Drivas and T. Elgindi (\cite{DE}, private communication). It seems that their result applies for perturbations of the Lamb dipole and is able to reproduce Theorem \ref{thm_fil_lamb}. 
	\end{remark}
}

\section{Proof}

This section is organized as follows. To begin with, in Subsection \ref{subsec:Lamb} we provide some detailed properties of the Lamb dipole. Then in Subsection 2.2, we provide a reformulation of the orbital stability statement from \cite{AC2019}. Theorem \ref{thm_lamb_lower} is proved in Subsection 2.4, after some preliminary lemmas in Subsection 2.3. Finally, we prove Theorem \ref{thm_fil_lamb} in Subsection 2.5. While the proof of Proposition \ref{prop:sqg} is completely parallel to that for 2D Euler, we sketch the main steps in Subsection 2.6.

\subsection{Properties of   Lamb dipole}\label{subsec:Lamb}

For the reader's convenience, let us discuss  in some detail   properties of the Lamb dipole. Recall that 
\begin{equation*}
	\omega_L=\begin{cases} 
		g(r)\sin\theta,\quad &r\leq 1,\\
		0 ,\quad &r>1\end{cases}
\end{equation*} and \begin{equation*}
	\begin{split}
		g(r)=\left(\frac{-2c_L}{J_0(c_L)}\right)J_1(c_L r).
	\end{split}
\end{equation*} Let us recall that the Bessel function
$J_i:\mathbb{R}_{\geq0}\to\mathbb{R},\quad i\in\mathbb{Z}$ of the first kind is defined by the solution of 
$$r^2J_i''(r)+rJ_i'(r)+(r^2-i^2)J_i(r)=0\quad\mbox{and}\quad  J_i(0)\quad \mbox{is finite}.$$ A direct computation using the form of the Laplacian in polar coordinates gives that $\psi_{L}$ defined by 
\begin{equation}   
	\psi_L =
	\begin{cases} (c_L^{-2}g(r)+r)\sin\theta, &\quad r\leq 1,\\
		\frac{1}{r}\sin\theta ,\quad & r>1\end{cases}
\end{equation} satisfies $-\lap \psi_{L} = \omg_{L}.$ Since $\psi_{L}$ decays at infinity, we see that $\psi_{L}$ is the stream function associated with $\omg_L$; we have $\psi_L=(-\Delta_{\mathbb{R}^2})^{-1}\omega_L=-\frac{1}{2\pi}\log|\cdot_x|*_{\mathbb{R}^2}\omega_L.$ The corresponding velocity field $u_L$ can be obtained by
$$u_L=-\nabla^\perp \psi_L=(\partial_{x_2}\psi_L,-\partial_{x_1}\psi_L).$$
In polar coordinates, we have explicitly 
\begin{equation}  \label{eq:vel1}
	u_L^{1}=\begin{cases}
		1+\frac{g(r)}{c_L^2 r}+\frac{\sin^2\theta}{c_L^2 r}[rg'(r)-g(r)]
		\quad & r\leq 1,\\
		\frac{1}{r^2}\cos 2\theta ,\quad & r>1,\end{cases}
\end{equation} and
\begin{equation*}   
	u_L^{2}=\begin{cases} \frac {-1} {2c_L^2 r} \sin 2\theta \left[
		rg'(r)-g(r)
		\right],\quad & r\leq 1,\\
		\frac{1}{r^2}\sin 2\theta ,\quad &r>1.\end{cases}
\end{equation*} Now, to see that $\omg_L$ indeed defines a traveling wave solution, one can verify that
\begin{equation*}\label{lamb_dipole_vor}
	\omega_{L}=c_L^2 f_L(\psi_L-W_L x_2)\quad\mbox{in}\quad\mathbb{R}^2_+:=\{x\in\mathbb{R}^2\,|\, x_2> 0\}, 
\end{equation*} where $f_L$ is simply 
\begin{equation*}
	f_L(s)=s^+=\begin{cases} s,\quad &s>0\\ 0,\quad &s\leq 0.\end{cases}
\end{equation*} This functional relationship between $\omg_L$ and $\psi_L - W_Lx_2$ shows that $\omg_L$ defines a traveling wave solution with velocity $W_L e_{x_1}$. 

Let us now remark on  regularity of the Lamb dipole. We note that $g\in C^2([0,\infty))$ and satisfies
$$g(0)=g(1)=0,\quad g'(1)<0,\quad  \mbox{and} \quad g(r)>0,\quad r\in(0,1).$$ This shows that $\omg_{L} \in { C^{0,1}(\bbR^2)}$. Then, from Schauder theory, $\psi_L \in C^{2,\alpha}(\mathbb{R}^2)$ for any $\alpha\in(0,1)$. In particular, the velocity $u_L:=\mathcal{K}[\omega_L]$ is Lipschitz continuous.

\subsection{Orbital stability in the norm $(L^1+L^2+\mbox{impulse})$}
 
We borrow   the stability theorem from \cite[Theorem 1.1]{AC2019} adapted to the Lamb dipole   on the \textit{unit} disk with \textit{unit} speed \eqref{lamb}:
\begin{thm}[{{\cite[Theorem 1.1]{AC2019}}}]\label{thm_lamb}
  The Lamb dipole $\omega_{L}$ is orbitally stable in the following sense: for $\nu>0$ and $\varepsilon>0$, there exists $\delta=\delta(\nu,\varepsilon)>0$ such that for $\omg_0$ satisfying \begin{itemize}
  	\item $\omega_0\in L^{1}\cap L^{2}(\mathbb{R}^{2})$, $x_2\omega_{0}\in L^{1} $,  
  	\item $\omega_0\geq 0$ on $\mathbb{R}^2_+$ and $\omega_0$ is {odd-symmetric with respect to the $x_1$-axis},
  	\item $\nrm{\omega_0}_{L^1}\leq \nu$, and
  	\item $\left\|\omega_0-\omega_{L} \right\|_{  L^2}+\left\|x_2(\omega_0-\omega_{L}) \right\|_{L^1}\leq \delta$,
  \end{itemize} 
there exists a global weak solution $\omega(t)$ of \eqref{2d_euler} satisfying
\begin{align*}
\inf_{\tau\in\mathbb{R}}\left\{\left\|\omega(t)-\omega_{L}(\cdot-\tau e_{x_1}) \right\|_{ L^2}+\left\|x_2(\omega(t)-\omega_{L}(\cdot-\tau e_{x_1})) \right\|_{L^1}\right\}\leq \varepsilon,\quad \textrm{for all}\ t\geq0.  
\end{align*} 
\end{thm}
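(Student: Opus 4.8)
The plan is to follow the classical variational/concentration--compactness route to orbital stability (in the spirit of Cazenave--Lions and Burton), carried out in the weighted topology dictated by the conserved impulse. \emph{Step 1 (variational characterization).} Let $E[\omg]=\frac12\int_{\bbR^2}\omg\,(-\lap)^{-1}\omg$ denote the kinetic energy; for $\omg$ odd-symmetric and nonnegative on $\bbR^2_+$ it admits the manifestly nonnegative representation $E[\omg]=\int_{\bbR^2_+}\int_{\bbR^2_+}\omg(x)\,G_+(x,y)\,\omg(y)\,dx\,dy$ with $G_+$ the half-plane Green's function, which together with a Hardy--Littlewood--Sobolev estimate shows that $E$ is finite and continuous on the class
\[ \calK_{\nu,\mu}:=\Big\{\omg\in L^1\cap L^2:\ \omg \text{ odd-symmetric},\ \omg\ge0 \text{ on } \bbR^2_+,\ \nrm{\omg}_{L^1}\le\nu,\ \nrm{\omg}_{L^2}\le\nrm{\omg_L}_{L^2},\ \textstyle\int_{\bbR^2_+}x_2\omg=\mu\Big\}. \]
For $(\nu,\mu)$ the values attained by $\omg_L$, one shows that $I_{\nu,\mu}:=\sup_{\calK_{\nu,\mu}}E$ is positive and finite, is attained, and that \emph{every} maximizer equals $\omg_L(\cdot-\tau e_{x_1})$ for some $\tau\in\bbR$. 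Existence of a maximizer and the fact that it satisfies a semilinear profile relation $\omg=c^2(\psi-W x_2)^+$ come from Burton's rearrangement theory; the uniqueness is then an ODE rigidity statement for that elliptic equation, whose radial-on-the-disk solution is pinned down by the Bessel function $J_1$ exactly as in \eqref{lamb}. This is the input imported from \cite{AC2019}.

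\emph{Step 2 (compactness of almost-maximizing sequences).} The key lemma is: if $\omg_n\in\calK_{\nu_n,\mu_n}$ with $\nu_n\to\nu$, $\mu_n\to\mu$ and $E[\omg_n]\to I_{\nu,\mu}$, then along a subsequence there exist horizontal shifts $\tau_n\in\bbR$ with $\omg_n(\cdot+\tau_n e_{x_1})\to\omg_L$ strongly in $L^1\cap L^2$ and $x_2\omg_n(\cdot+\tau_n e_{x_1})\to x_2\omg_L$ in $L^1$. This is a concentration--compactness argument. Vertical localization is automatic: the impulse identity $\int x_2\omg_n=\mu_n$ controls the mass at large $x_2$, while oddness/positivity forbids downward escape, so only the horizontal translation is noncompact, and one normalizes it by centering. \emph{Vanishing} (energy dispersing to zero) is excluded because $E[\omg_n]\to I_{\nu,\mu}>0$ while $E$ is, via the HLS bound, controlled by the concentration of the mass. \emph{Dichotomy} (mass splitting into two far-apart lumps carrying $(\nu_1,\mu_1)$ and $(\nu_2,\mu_2)$) is excluded by the \emph{strict} superadditivity $I_{\nu,\mu}>I_{\nu_1,\mu_1}+I_{\nu_2,\mu_2}$ for every nontrivial splitting $\nu_1+\nu_2=\nu$, $\mu_1+\mu_2=\mu$; this inequality, obtained from the monotonicity/scaling of $(\nu,\mu)\mapsto I_{\nu,\mu}$ and the positive interaction energy between the lumps, is the crucial structural input and relies on the caps in $\calK_{\nu,\mu}$ being inherited from $\omg_L$. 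With vanishing and dichotomy ruled out, a centered subsequence converges weakly to some $\omg_*\in\calK_{\nu,\mu}$ with $E[\omg_*]\ge I_{\nu,\mu}$, hence $\omg_*$ is a maximizer and equals $\omg_L$ by Step 1; weak convergence together with convergence of the $L^1$, $L^2$ and impulse functionals (all saturated in the limit) upgrades to the asserted strong convergence.

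\emph{Step 3 (compactness $\Rightarrow$ orbital stability).} Argue by contradiction: suppose there are $\varep_0>0$, data $\omg_0^{(n)}$ satisfying the hypotheses with $\dlt_n\to0$, and times $t_n\ge0$ with $\inf_{\tau}\big(\nrm{\omg^{(n)}(t_n)-\omg_L(\cdot-\tau e_{x_1})}_{L^2}+\nrm{x_2(\omg^{(n)}(t_n)-\omg_L(\cdot-\tau e_{x_1}))}_{L^1}\big)\ge\varep_0$. The global weak solution produced for such data (as a vanishing-viscosity/mollification limit) conserves odd symmetry, nonnegativity on $\bbR^2_+$, every $L^p$ norm (via the distribution function transported by the flow), the impulse $\int_{\bbR^2_+}x_2\omg$, and the energy $E$. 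Hence $\omg^{(n)}(t_n)\in\calK_{\nu_n,\mu_n}$ with $\nu_n=\nrm{\omg_0^{(n)}}_{L^1}\to\nrm{\omg_L}_{L^1}$, $\mu_n\to\int x_2\omg_L$, and, using continuity of $E$ on $\calK$ in the $L^1\cap L^2$-plus-weighted-$L^1$ topology and $\omg_0^{(n)}\to\omg_L$, $E[\omg^{(n)}(t_n)]=E[\omg_0^{(n)}]\to E[\omg_L]=I$. Thus $\{\omg^{(n)}(t_n)\}$ is an almost-maximizing sequence, so by Step 2 a horizontally shifted subsequence converges to $\omg_L$ in the stated norm, contradicting the separation $\varep_0$.

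\emph{Main obstacle.} The heart of the argument is Step 2, and within it the strict superadditivity that defeats dichotomy on the unbounded plane: this is precisely where the geometry of $\bbR^2$ (as opposed to a bounded domain) bites, and where the constraint set must be chosen exactly so that $\omg_L$ is its unique energy maximizer. A secondary delicate point is the rigidity in Step 1, i.e.\ that the profile equation forces precisely the Lamb dipole and rules out any competitor (such as a multi-bump configuration).
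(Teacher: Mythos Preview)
The paper does not prove this theorem at all: it is quoted verbatim from \cite{AC2019} (Abe--Choi) and used as a black box, so there is no ``paper's own proof'' to compare against. Your sketch is a faithful outline of the strategy actually carried out in \cite{AC2019}: characterize $\omega_L$ as the (translation-unique) maximizer of kinetic energy over an admissible class, establish compactness of near-maximizing sequences modulo horizontal translation via concentration--compactness (with strict superadditivity to rule out dichotomy), and then run the standard Cazenave--Lions contradiction argument using conservation of energy, impulse, and the $L^p$ norms along the constructed weak solution. So at the level of architecture you are on target.

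One slip worth flagging: in Step~3 you write $\nu_n=\nrm{\omg_0^{(n)}}_{L^1}\to\nrm{\omg_L}_{L^1}$, but the hypotheses of the theorem give only $\nrm{\omg_0}_{L^1}\le\nu$ together with closeness in $L^2$ and weighted $L^1$; neither controls $\nrm{\omg_0-\omg_L}_{L^1}$, so there is no reason for the $L^1$ norms to converge. This is precisely why $\nu$ appears as an \emph{inequality} parameter in the theorem and in the admissible class (and why $\delta$ depends on $\nu$). The correct formulation of the compactness lemma keeps $\nrm{\omg_n}_{L^1}\le\nu$ fixed and shows the Lamb dipole maximizes energy under that cap together with the $L^2$ and impulse constraints; the $L^1$ constraint is generically slack at the maximizer and need not be saturated in the limit. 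Adjust Step~2 and Step~3 accordingly and the outline stands.
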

We note that the stability norm above is in terms of $(L^2+\mbox{impulse})$.  
For convenience,  we will use the following modification
using $(L^1+L^2+\mbox{impulse})$, since $L^1$--stability in vorticity is more natural   when controlling $L^\infty$ of the fluid velocity. To this end we present the following result. 
\begin{corollary}\label{cor_lamb}
For  $\varepsilon>0$, there exists $\delta=\delta(\varepsilon)>0$ such that for $\omega_0\in L^{1}\cap L^{2}(\mathbb{R}^{2})$ satisfying $x_2\omega_{0}\in L^{1}$,  
 $$\omega_0\geq 0 \quad\mbox{on}\quad \mathbb{R}^2_+,\quad\omega_0:\,\mbox{odd-symmetric with respect to $x_1$-axis},$$
   and 
\begin{align*}
 \left\|\omega_0-\omega_{L} \right\|_{L^1\cap L^2}+\left\|x_2(\omega_0-\omega_{L}) \right\|_{L^1}\leq \delta,
\end{align*}
there exist a global weak solution $\omega(t)$ of  \eqref{2d_euler}
and  a function $\tau:[0,\infty)\to\mathbb{R}$ satisfying $\tau(0)=0$ and
\begin{align}\label{assum_lem_stab}
\sup_{t\geq 0}\left\{\left\|\omega(t)-\omega_{L}(\cdot-\tau(t) e_{x_1}) \right\|_{ L^1\cap L^2}+\left\|x_2(\omega(t)-\omega_{L}(\cdot-\tau(t) e_{x_1})) \right\|_{L^1}\right\}\leq \varepsilon. 
\end{align} 
\end{corollary}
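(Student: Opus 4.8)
The plan is to read the corollary off from Theorem~\ref{thm_lamb} after two modifications: absorbing the auxiliary bound $\nrm{\omega_0}_{L^1}\le\nu$ into the hypotheses, and upgrading the $L^2$--part of the conclusion to an $L^1\cap L^2$--bound. For the first point, given $\varepsilon>0$ I would set $\nu:=\nrm{\omega_L}_{L^1}+1$, fix an auxiliary accuracy $\varepsilon'>0$ to be chosen at the end, and put $\delta:=\min\{1,\varepsilon',\delta(\nu,\varepsilon')\}$, where $\delta(\nu,\varepsilon')$ is the radius supplied by Theorem~\ref{thm_lamb}. If $\omega_0$ satisfies the hypotheses of the corollary with this $\delta$, then $\nrm{\omega_0}_{L^1}\le\nrm{\omega_L}_{L^1}+\delta\le\nu$ and $\nrm{\omega_0-\omega_L}_{L^2}+\nrm{x_2(\omega_0-\omega_L)}_{L^1}\le\delta\le\delta(\nu,\varepsilon')$, so Theorem~\ref{thm_lamb} yields a global weak solution $\omega(t)$ with $\inf_{\tau\in\mathbb{R}}\big(\nrm{\omega(t)-\omega_L(\cdot-\tau e_{x_1})}_{L^2}+\nrm{x_2(\omega(t)-\omega_L(\cdot-\tau e_{x_1}))}_{L^1}\big)\le\varepsilon'$ for every $t\ge0$. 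For $t>0$ I would pick $\tau(t)\in\mathbb{R}$ almost realizing this infimum, and set $\tau(0):=0$, which is admissible since at $t=0$ the choice $\tau=0$ already gives the value $\nrm{\omega_0-\omega_L}_{L^2}+\nrm{x_2(\omega_0-\omega_L)}_{L^1}\le\delta$; as the statement only asks for some function $\tau$ with $\tau(0)=0$, no regularity of $t\mapsto\tau(t)$ is needed. This produces, uniformly in $t\ge0$, a quantity $v_t:=\omega(t)-\omega_L(\cdot-\tau(t)e_{x_1})$ with $\nrm{v_t}_{L^2}+\nrm{x_2v_t}_{L^1}\aleq\varepsilon'$.

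The substance is the bound on $\nrm{v_t}_{L^1}$ (recall $L^p=L^p(\mathbb{R}^2_+)$). On $\mathbb{R}^2_+$ both $\omega(t)$ and $\omega_L(\cdot-\tau(t)e_{x_1})$ are nonnegative, and $\supp\omega_L(\cdot-\tau(t)e_{x_1})\cap\mathbb{R}^2_+\subset B^+_t$, where $B^+_t$ is the upper half of the unit disk centered at $(\tau(t),0)$, of area $\pi/2$ independently of $\tau(t)$. Split $\nrm{v_t}_{L^1}=\nrm{v_t}_{L^1(B^+_t)}+\nrm{v_t}_{L^1(\mathbb{R}^2_+\setminus B^+_t)}$. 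On $B^+_t$, Cauchy--Schwarz and $|B^+_t|=\pi/2$ give $\nrm{v_t}_{L^1(B^+_t)}\le(\pi/2)^{1/2}\nrm{v_t}_{L^2}\aleq\varepsilon'$. Off $B^+_t$ within $\mathbb{R}^2_+$ we have $\omega_L(\cdot-\tau(t)e_{x_1})=0$, so $\nrm{v_t}_{L^1(\mathbb{R}^2_+\setminus B^+_t)}=\int_{\mathbb{R}^2_+}\omega(t)-\int_{B^+_t}\omega(t)$. Here I would invoke that the weak solution from \cite{AC2019} preserves the sign, the odd symmetry, and the $L^1$--norm of the vorticity, whence $\int_{\mathbb{R}^2_+}\omega(t)=\nrm{\omega(t)}_{L^1}=\nrm{\omega_0}_{L^1}\le\nrm{\omega_L}_{L^1}+\delta$, while $\int_{B^+_t}\omega(t)\ge\int_{B^+_t}\omega_L(\cdot-\tau(t)e_{x_1})-\nrm{v_t}_{L^1(B^+_t)}=\nrm{\omega_L}_{L^1}-\nrm{v_t}_{L^1(B^+_t)}$. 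Since $\nrm{v_t}_{L^1(B^+_t)}\aleq\varepsilon'$, it follows that $\nrm{v_t}_{L^1(\mathbb{R}^2_+\setminus B^+_t)}\aleq\delta+\varepsilon'$, hence $\nrm{v_t}_{L^1}\aleq\delta+\varepsilon'$. Combining with $\nrm{v_t}_{L^2}+\nrm{x_2v_t}_{L^1}\aleq\varepsilon'$ and $\delta\le\varepsilon'$, the left side of \eqref{assum_lem_stab} is $\le C\varepsilon'$ for an absolute constant $C$; choosing $\varepsilon':=\varepsilon/C$ completes the proof.

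The one nontrivial input is the conservation $\nrm{\omega(t)}_{L^1}=\nrm{\omega_0}_{L^1}$, together with the preserved sign and odd symmetry of the weak solution: this is exactly what rules out a small sliver of vorticity escaping towards spatial infinity and spoiling the $L^1$--closeness while keeping the $L^2$-- and impulse--distances small. It is a standard property of the solutions constructed in \cite{AC2019} (they arise as limits preserving the distribution function of $\omega$, with the symmetry axis $\{x_2=0\}$ invariant under the flow), so I would simply cite it. I would also emphasize that the boundedness of $\supp\omega_L$ is used crucially above: $L^2$--closeness by itself carries no $L^1$--information, and it is the interplay of a fixed-size support for $\omega_L$ with the mass conservation for $\omega$ that makes the upgrade work.
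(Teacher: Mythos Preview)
Your proof is correct and follows essentially the same route as the paper's: fix $\nu=\nrm{\omega_L}_{L^1}+1$, apply Theorem~\ref{thm_lamb} with a small auxiliary $\varepsilon'$, then upgrade $L^2$--closeness to $L^1$--closeness by splitting on the translated half-disk $B_t^+$, using Cauchy--Schwarz on $B_t^+$ and conservation of $\nrm{\omega(t)}_{L^1}$ on the complement. The paper's chain of inequalities is exactly your computation $\nrm{v_t}_{L^1}\le 2\nrm{v_t}_{L^1(B_t^+)}+\delta\le 2\sqrt{\pi/2}\,\nrm{v_t}_{L^2}+\delta$, just written with explicit constants rather than $\lesssim$; your explicit flagging of the needed properties of the weak solution (sign, symmetry, $L^1$--conservation) is a useful addition that the paper leaves implicit.
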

\begin{proof}
 We denote $$\nu_L:= \int_{\mathbb{R}^{2}_{+}}\omega_L dx\in(0,\infty).$$
Let $\varepsilon>0$. From Theorem \ref{thm_lamb},
 we take a constant $$\delta=\delta(\nu_L+1,\frac{\varepsilon}{16})>0.$$ We may assume $\delta<1$ and $\delta<\varepsilon/4$. Then consider any initial data
  $\omega_0\in L^{1}\cap L^{2}(\mathbb{R}^{2}_{+})$ satisfying $x_2\omega_{0}\in L^{1}(\mathbb{R}^{2}_{+})$, $\omega_{0}\geq 0$
   and 
\begin{align*}
 \left\|\omega_0-\omega_{L} \right\|_{ L^1\cap L^2}+\left\|x_2(\omega_0-\omega_{L}) \right\|_{L^1}\leq \delta. 
\end{align*}
Since the initial data trivially  satisfies
$$\|\omega_0\|_{L^1}\leq \|\omega_L\|_{L^1}+\delta\leq \nu_L+1$$
and
\begin{align*}
 \left\|\omega_0-\omega_{L} \right\|_{   L^2}+\left\|x_2(\omega_0-\omega_{L}) \right\|_{L^1}\leq \delta,
\end{align*}
 we know, by Theorem \ref{thm_lamb}, that
there exists a global weak solution $\omega(t)$ of  \eqref{2d_euler} satisfying
\begin{align}\label{old_est}
\inf_{\tau\in\mathbb{R}}\left\{\left\|\omega(t)-\omega_{L}(\cdot-\tau e_{x_1}) \right\|_{ L^2}+\left\|x_2(\omega(t)-\omega_{L}(\cdot-\tau e_{x_1})) \right\|_{L^1}\right\}\leq   \frac{\varepsilon} {16},\quad \textrm{for all}\ t\geq0.  
\end{align}
 Let $t\geq 0$ be fixed.  By using \eqref{old_est},   we take $\tau=\tau(t)
\in\mathbb{R}$ satisfying
$$\left\|\omega(t)-\omega_{L}(\cdot-\tau e_{x_1}) \right\|_{ L^2}
+\left\|x_2(\omega(t)-\omega_{L}(\cdot-\tau e_{x_1})) \right\|_{L^1}  \leq \frac \varepsilon 8.$$ Then it remains to show
$$\left\|\omega(t)-\omega_{L}(\cdot-\tau e_{x_1}) \right\|_{ L^1}   \leq \frac 7 8 \varepsilon.$$ Indeed, we recall
$\supp\,\omega_L(\cdot-\tau e_{x_1})=\overline{D_+^\tau} $ where
$ {D_+^\tau}:=\{x\in\mathbb{R}^2_+\,|\, |x-\tau e_{x_1}|<1\}$. Thus we compute
  \begin{align*}
& \|\omega(t)-\omega_L(\cdot-\tau e_{x_1})\|_{L^1(\mathbb{R}^2_+)} 
=\|\omega(t)-\omega_L(\cdot-\tau e_{x_1})\|_{L^1(D_+^\tau)}+\|\omega(t) \|_{L^1(\mathbb{R}^2_+\setminus D_+^\tau)}\\ 
&=\|\omega(t)-\omega_L(\cdot-{\tau e_{x_1}})\|_{L^1(D_+^\tau)}+\|\omega(t) \|_{L^1(\mathbb{R}^2_+ )}
- \|\omega(t) \|_{L^1( D_+^\tau)}\\ 
&=\|\omega(t)-\omega_L(\cdot-{\tau e_{x_1}})\|_{L^1(D_+^\tau)}+\|\omega_0 \|_{L^1(\mathbb{R}^2_+ )}
- \|\omega(t) \|_{L^1( D_+^\tau)}\\ 
&\leq \|\omega(t)-\omega_L(\cdot-{\tau e_{x_1}})\|_{L^1(D_+^\tau)}+\|\omega_0-\omega_L \|_{L^1(\mathbb{R}^2_+ )}+\|\omega_L\|_{L^1(\mathbb{R}^2_+ )}
- \|\omega(t) \|_{L^1( D_+^\tau)}\\ 
&\leq \|\omega(t)-\omega_L(\cdot-{\tau e_{x_1}})\|_{L^1(D_+^\tau)}+\delta+\|\omega_L(\cdot-{\tau e_{x_1}})\|_{L^1(D_+^\tau )}
- \|\omega(t) \|_{L^1( D_+^\tau)}\\ 
&\leq 2 \|\omega(t)-\omega_L(\cdot-{\tau e_{x_1}})\|_{L^1(D_+^\tau)}+\frac{\varepsilon}{4} \\ 
&\leq 2 \sqrt{|D_+^\tau|}\cdot \|\omega(t)-\omega_L(\cdot-{\tau e_{x_1}})\|_{L^2(D_+^\tau)}+\frac{\varepsilon}{4} \\ 
&\leq 2 \sqrt{\pi/2}\cdot\frac{\varepsilon} 8+\frac{\varepsilon}{4} \leq \frac {7 }{8}  \varepsilon.
\end{align*} 
Lastly, due to $\delta<\varepsilon/4<\varepsilon$, we can take $\tau(0)=0$. \end{proof}

\subsection{Estimates on the shift $\tau(t)$}
{
We first estimate the amount of ``shell'' in the forward direction of the Lamb dipole.
}
\begin{lemma}\label{lem:lamb_error} 
There exist absolute constants $c_0>0$, $\kappa_0\in(0,1/2)$ such that for any $\kappa \in[0,\kappa_0],$
\begin{equation}\label{claim_lamb_1}
\int_{\{x\in\mathbb{R}^2_+\,|\,x_1\geq 1-\kappa\}}\omega_L(x)\,dx\geq c_0 \kappa^{3}.
\end{equation}

\end{lemma}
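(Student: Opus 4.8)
The plan is to compute the left-hand side of \eqref{claim_lamb_1} directly in polar coordinates, using the explicit formula $\omega_L = g(r)\sin\theta$ on the unit disk, and extract the leading-order behavior in $\kappa$ as $\kappa \to 0^+$. Writing the region $\{x \in \mathbb{R}^2_+ : x_1 \geq 1-\kappa\}$ in polar coordinates, a point $(r\cos\theta, r\sin\theta)$ with $r \leq 1$ (outside the disk $\omega_L$ vanishes) and $\theta \in (0,\pi)$ lies in the region iff $r\cos\theta \geq 1-\kappa$; in particular this forces $\cos\theta > 0$, so $\theta \in (0,\pi/2)$, and $r \in [(1-\kappa)/\cos\theta, 1]$, which is nonempty only when $\cos\theta \geq 1-\kappa$, i.e. $\theta \in [0, \theta_\kappa]$ with $\theta_\kappa := \arccos(1-\kappa)$. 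Thus
\begin{equation*}
\int_{\{x_1 \geq 1-\kappa\}} \omega_L\, dx = \int_0^{\theta_\kappa} \sin\theta \int_{(1-\kappa)/\cos\theta}^{1} g(r)\, r\, dr\, d\theta.
\end{equation*}

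Next I would establish the relevant quantitative lower bounds on $g$. Since $g \in C^2([0,\infty))$, $g(1) = 0$, $g'(1) < 0$ and $g > 0$ on $(0,1)$, there are absolute constants $c_1, c_2 > 0$ and some $r_0 \in (1/2, 1)$ such that $g(r) \geq c_1(1-r)$ for $r \in [r_0, 1]$ and $g(r) \geq c_2 (1-r)$ for all $r \in [1/2,1]$ — more simply, $g(r) \asymp (1-r)$ uniformly on $[1/2,1]$ by Taylor expanding at $r=1$. Restricting the $\theta$-integral to $[0,\theta_\kappa]$ with $\kappa \leq \kappa_0$ small, every $r$ appearing in the inner integral satisfies $r \geq (1-\kappa)/\cos\theta \geq 1-\kappa \geq 1/2$, so the bound $g(r) \geq c_2(1-r)$ applies throughout, and also $r \geq 1/2$ gives the harmless factor $r\, dr \geq \tfrac12 dr$. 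Hence
\begin{equation*}
\int_{\{x_1 \geq 1-\kappa\}} \omega_L\, dx \geq \frac{c_2}{2}\int_0^{\theta_\kappa} \sin\theta \int_{(1-\kappa)/\cos\theta}^{1} (1-r)\, dr\, d\theta = \frac{c_2}{4}\int_0^{\theta_\kappa} \sin\theta \left(1 - \frac{1-\kappa}{\cos\theta}\right)^2 d\theta.
\end{equation*}

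Finally I would estimate this last integral from below by $c_0\kappa^3$. For small $\theta$ one has $\sin\theta \geq \theta/2$ and, writing $\cos\theta = 1 - \theta^2/2 + O(\theta^4)$, the quantity $1 - (1-\kappa)/\cos\theta = (\cos\theta - 1 + \kappa)/\cos\theta$. On the subinterval $\theta \in [0, \theta_\kappa/\sqrt{2}]$ (say), one has $\cos\theta - 1 \geq -\kappa/2$ for $\kappa_0$ small enough — because $\theta_\kappa^2 \asymp 2\kappa$ so $\theta^2/2 \leq \theta_\kappa^2/4 \asymp \kappa/2$ — whence $1 - (1-\kappa)/\cos\theta \geq (\kappa/2)/\cos\theta \geq \kappa/2$. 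Therefore the integrand is bounded below by $(\theta/2)(\kappa/2)^2 = \kappa^2\theta/8$ on $[0,\theta_\kappa/\sqrt2]$, and integrating gives $\gtrsim \kappa^2 \theta_\kappa^2 \asymp \kappa^3$ since $\theta_\kappa \asymp \sqrt\kappa$. Tracking the absolute constants yields \eqref{claim_lamb_1} with some absolute $c_0 > 0$ for all $\kappa \in [0,\kappa_0]$, the case $\kappa = 0$ being trivial.

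The only mild subtlety — and the step I would be most careful with — is making the change of variables and the interval restrictions uniform in $\kappa$: one must verify that for $\kappa \leq \kappa_0$ small, the relevant range of $r$ stays bounded away from $0$ (so that the linear lower bound $g(r) \gtrsim 1-r$ is valid and $r\,dr$ contributes only a constant), and that $\theta_\kappa = \arccos(1-\kappa) = \sqrt{2\kappa}\,(1 + O(\kappa))$ so that the cubic power in $\kappa$ comes out correctly (one power of $\kappa^2$ from the squared defect $(1-(1-\kappa)/\cos\theta)^2$ and a $\kappa^{1}$ from $\theta_\kappa^2$). There is no real obstacle here; it is a direct, if slightly delicate, computation. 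Note the upper matching bound $\lesssim \kappa^3$ also holds (giving in fact $\asymp \kappa^3$), but only the stated lower bound is needed.
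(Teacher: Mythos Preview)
Your argument is correct and takes a different route from the paper's. The paper proceeds geometrically: it exhibits an explicit rectangle $E_\kappa \subset D_+ \cap \{x_1 \geq 1-\kappa\}$ with sides of length $\kappa/4$ and $\sqrt{\kappa}/4$ (hence area $\asymp \kappa^{3/2}$) placed so that on $E_\kappa$ one has $1-\kappa \le r \le 1-\kappa/4$ and $x_2 \asymp \sqrt{\kappa}$; then $\omega_L = g(r)\sin\theta \gtrsim (1-r)\cdot x_2 \gtrsim \kappa \cdot \sqrt{\kappa}$ pointwise on $E_\kappa$, and multiplying height by area gives $\kappa^{3/2}\cdot\kappa^{3/2}=\kappa^3$. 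Your approach instead writes the full integral in polar coordinates and extracts the leading behavior analytically. This is a bit more work but has the virtue (which you note) of also yielding the matching upper bound, so in fact the integral is $\asymp \kappa^3$; the paper's box argument gives only the lower bound, which is all that is needed downstream.

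One minor point of care: with the subinterval $[0,\theta_\kappa/\sqrt{2}]$ the inequality $1-\cos\theta \le \kappa/2$ does not hold exactly, since $\theta_\kappa^2 \ge 2\kappa$ (from $\sin x \le x$) rather than $\le$; what you do get is $1-\cos\theta \le \theta_\kappa^2/4 = \kappa/2 + O(\kappa^2)$, hence $\cos\theta - 1 + \kappa \ge c\kappa$ for some $c \in (0,1/2)$ once $\kappa_0$ is small. This is enough for the argument, and since you already flag the constant-tracking as the delicate step, there is no real gap.
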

\begin{proof}

For $\kappa\in(0,1/2)$, we consider the rectangle $E_{\kappa}$ whose four corners (in Cartesian coordinates) are at 
$$
(1-{\kappa},\frac{\sqrt{\kappa}}4),\quad (1-\frac{3\kappa}4,\frac{\sqrt{\kappa}}4),\quad (1-\frac{3\kappa}4,\frac{\sqrt{\kappa}}2),\quad (1-{\kappa},\frac{\sqrt{\kappa}}2).
$$
Then the rectangle $E_{\kappa}$ has area 
$$\frac{\kappa}4\cdot\frac{\sqrt{\kappa}}{4}=\frac{\kappa^{3/2}}{16}
$$
and is contained in the region $D_+\cap \{ x_1\geq 1-\kappa\}$. Moreover,
it satisfies, for any $x\in E_\kappa$,
$$
1-\kappa \leq r \leq 1-\frac \kappa 4.
$$
 By recalling the definition \eqref{lamb}, we estimate, for $x\in E_\kappa$, 
$$\omega_L(x)=g(r)\sin\theta\geq g(r)x_2
\geq g(r)\cdot\frac{\sqrt\kappa}{4}.
$$  
Due to $g(1)=0$ and $g'(1)<0$, we may use the Taylor expansion near $r=1$ to conclude that there exist constants $\kappa_0\in(0,1/2)$ and $c>0$
such that for any $x\in E_\kappa$,
$$
g(r)\geq c\kappa
$$ whenever $\kappa\in(0,\kappa_0)$.
Thus we obtain
\begin{equation*}
\int_{\{x\in\mathbb{R}^2_+\,|\,x_1\geq 1-\kappa\}}\omega_L(x)\,dx\geq\int_{E_\kappa}\omega_L(x)\,dx\geq 
c\kappa\cdot\frac{\sqrt{\kappa}}{4}\cdot \frac{\kappa^{3/2}}{16}
\geq 
 c_0 \kappa^{3}.
\end{equation*} This gives the lemma. 
\end{proof}
 {We note that any shift $\tau$ satisfying the stability estimate\eqref{lamb_stab} does not have to be continuous. In fact, for each time, the value $\tau(t)$ can be multiply defined (if we ask the shift to satisfy the estimate \eqref{lamb_stab} only). Despite  this observation, 
in the following lemma, we shall prove that 
the position   $\tau(t)$ of shift  
remains close to $\tau(t')$ when $t$ is close to $t'$,
}
  under the additional assumption $\omg_0 \in H^3$. 
  This assumption will be removed later on in Proposition \ref{prop:travel-speed_lamb}. 
\begin{lemma}\label{lem:no_high_jump} 
There exist constants $\varepsilon_1, \tilde{K}>0$ such that 
for any $\omega_0\in L^1\cap L^\infty(\mathbb{R}^2)$ with 
\begin{equation}\label{assum_lem_h3}
\omega_0\in H^3(\mathbb{R}^2),
\end{equation}
if there is   a function $\tau:[0,\infty)\to\mathbb{R}$ 
satisfying 
\begin{align*}
\sup_{t\geq 0}\left\{\left\|\omega(t)-\omega_{L}(\cdot-\tau(t) e_{x_1}) \right\|_{ L^1
}
\right\}\leq \varepsilon 
\end{align*}  for some $\varepsilon\in(0,\varepsilon_1)$, then 
for each $T>0$,  there exists a constant $c_{\omega_0,T}>0$ such that
for any $t,t'\in[0,T]$, 
the function $\tau$  satisfies
 \begin{equation*}
	\begin{split}
		|\tau(t) - \tau(t')  | \leq \tilde{K}\varepsilon^{1/3}
	\end{split}
\end{equation*} whenever $|t-t'|\leq c_{\omega_0,T}\cdot \varepsilon$.
\end{lemma}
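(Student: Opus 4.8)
## Proof Strategy for Lemma \ref{lem:no_high_jump}

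The plan is to argue by contradiction on the \emph{size} of the jump, leveraging the $H^3$ regularity only to obtain a quantitative modulus of continuity in time for the solution $\omega(t)$ in $L^1$, and then to confront a large spatial shift of the Lamb dipole with the ``shell mass'' lower bound from Lemma \ref{lem:lamb_error}. Concretely, suppose $\tau(t)$ and $\tau(t')$ differ by more than $\tilde K\varepsilon^{1/3}$. The triangle inequality gives
\begin{equation*}
	\nrm{\omega_L(\cdot - \tau(t)e_{x_1}) - \omega_L(\cdot - \tau(t')e_{x_1})}_{L^1} \leq \nrm{\omega(t) - \omega_L(\cdot - \tau(t)e_{x_1})}_{L^1} + \nrm{\omega(t)-\omega(t')}_{L^1} + \nrm{\omega(t') - \omega_L(\cdot - \tau(t')e_{x_1})}_{L^1}.
\end{equation*}
The first and third terms are each at most $\varepsilon$ by hypothesis. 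For the middle term, I would use that $\omega_0 \in H^3 \subset L^1 \cap L^\infty$ with bounded velocity, so the solution is (locally in time) Lipschitz in $L^1$: writing $\partial_t \omega = -u\cdot\nabla\omega = -\mathrm{div}(u\omega)$ and using $\nrm{u(t)}_{L^\infty} \lesssim \nrm{\omega(t)}_{L^1\cap L^\infty}^{1/2}\nrm{\omega(t)}_{L^1\cap L^\infty}^{1/2}$ together with conservation of $\nrm{\omega(t)}_{L^p}$, one gets $\nrm{\omega(t)-\omega(t')}_{L^1} \leq C_{\omega_0,T}\abs{t-t'}$ for $t,t'\in[0,T]$; in fact the cleanest route is to test the equation against a fixed $L^\infty$ function and use $\nrm{\nabla\omega(t)}_{L^2}$, which is \emph{not} conserved but is finite and propagates on $[0,T]$ by standard $H^3$ (or even $H^1$) energy estimates for smooth 2D Euler, giving the constant $c_{\omega_0,T}$. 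Hence if $\abs{t-t'}\leq c_{\omega_0,T}\varepsilon$ the middle term is also $\lesssim\varepsilon$, so the left-hand side is at most $C\varepsilon$.

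On the other hand, I claim a shift of the Lamb dipole by an amount $\eta := \abs{\tau(t)-\tau(t')}$ costs at least $\sim\min(\eta,1)^3$ in $L^1$ — more precisely, $\nrm{\omega_L(\cdot) - \omega_L(\cdot - \eta e_{x_1})}_{L^1} \gtrsim \eta^3$ for small $\eta$ (for $\eta$ bounded below the bound is simply a positive constant). This is exactly where Lemma \ref{lem:lamb_error} enters: consider the half-plane region $\{x_1 \geq 1 - \kappa\}$ with $\kappa = c\eta$ for a small absolute constant $c$. The translated dipole $\omega_L(\cdot - \eta e_{x_1})$ is supported in $\{x_1 \leq 1 + \eta\}\cap\{\abs{x-\eta e_{x_1}}\leq 1\}$, so on the thin curved sliver $S_\eta := \{x\in\mathbb{R}^2_+ : x_1 \geq 1 - \kappa\}\cap \mathrm{supp}(\omega_L)$ we can compare: the region of $S_\eta$ that lies \emph{outside} the support of the translate still carries mass $\geq c_0\kappa^3 - (\text{overlap correction})$. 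A clean way to make this rigorous is to integrate the difference against $\mathrm{sgn}$ of $\omega_L - \omega_L(\cdot - \eta e_{x_1})$ restricted to a well-chosen region, or simply to observe that $\int_{\{x_1 \geq 1\}}\omega_L\,dx - \int_{\{x_1\geq 1\}}\omega_L(\cdot - \eta e_{x_1})\,dx$ — the latter being $\int_{\{x_1 \geq 1-\eta\}}\omega_L\,dx \geq \int_{\{x_1\geq 1-\eta\}}\omega_L\,dx$ — wait, one must be careful with the sign. The robust version: $\nrm{\omega_L - \omega_L(\cdot-\eta e_{x_1})}_{L^1} \geq \int_{\{x_1 \geq 1\}} \big(\omega_L(\cdot - \eta e_{x_1}) - \omega_L\big)\,dx = \int_{\{x_1\geq 1-\eta\}}\omega_L\,dx - 0 \geq c_0\eta^3$ using that $\omega_L$ vanishes on $\{x_1\geq 1\}$ while its translate does not, and applying Lemma \ref{lem:lamb_error} with $\kappa = \eta$ (assuming $\eta \leq \kappa_0$).

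Combining the two bounds: $c_0\eta^3 \leq \nrm{\omega_L(\cdot-\tau(t)e_{x_1}) - \omega_L(\cdot-\tau(t')e_{x_1})}_{L^1} \leq C\varepsilon$, whence $\eta \leq (C/c_0)^{1/3}\varepsilon^{1/3} =: \tilde K\varepsilon^{1/3}$, which is the desired conclusion (and the restriction $\eta\leq\kappa_0$ is automatically satisfied once $\varepsilon < \varepsilon_1$ is small enough that $\tilde K\varepsilon_1^{1/3}\leq\kappa_0$; if $\eta > \kappa_0$ one gets an even stronger lower bound $c_0\kappa_0^3$ on the $L^1$ difference, forcing $\varepsilon \gtrsim 1$, a contradiction for $\varepsilon<\varepsilon_1$). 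The main obstacle — and the step deserving the most care — is the time-continuity estimate $\nrm{\omega(t)-\omega(t')}_{L^1}\leq c_{\omega_0,T}^{-1}\abs{t-t'}$ with a constant depending only on $\omega_0$ and $T$: one must justify propagation of enough regularity (here $\nabla\omega\in L^\infty_T L^2$ suffices, which follows from the vorticity transport structure since $\nabla\omega$ satisfies a linear transport equation with coefficient $\nabla u\in L^\infty_T L^\infty$... actually $\nabla u$ is only log-Lipschitz, so one should instead propagate $\nrm{\omega}_{H^1}$ via $\frac{d}{dt}\nrm{\nabla\omega}_{L^2}^2 \lesssim \nrm{\nabla u}_{L^\infty}\nrm{\nabla\omega}_{L^2}^2$ and bound $\nrm{\nabla u}_{L^\infty}$ using $H^3\hookrightarrow W^{1,\infty}$ and persistence of $H^3$). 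This is classical for smooth 2D Euler, so I would cite it rather than reproduce it; the assumption \eqref{assum_lem_h3} is present precisely to make this legitimate, and the dependence of $c_{\omega_0,T}$ on the profile (not just its norms) is harmless since it gets removed in Proposition \ref{prop:travel-speed_lamb}.
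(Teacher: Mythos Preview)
Your overall strategy---triangle inequality to bound $\nrm{\omega_L^{\tau(t)} - \omega_L^{\tau(t')}}_{L^1}$ by $2\varepsilon$ plus a time-continuity term, then the cubic lower bound on this quantity via Lemma~\ref{lem:lamb_error}---is exactly the paper's approach, and your derivation of the lower bound $\nrm{\omega_L - \omega_L(\cdot-\eta e_{x_1})}_{L^1} \geq c_0\eta^3$ by integrating over $\{x_1\geq 1\}$ is clean and correct.

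There is, however, a small but genuine technical gap in your time-continuity step. You propose to bound $\nrm{\omega(t)-\omega(t')}_{L^1(\bbR^2)}$ globally by testing against $\phi\in L^\infty$ and using $\nrm{\nabla\omega(t)}_{L^2}$; but $\int \phi\, u\cdot\nabla\omega$ is controlled by $\nrm{\phi}_{L^\infty}\nrm{u}_{L^2}\nrm{\nabla\omega}_{L^2}$ or $\nrm{\phi}_{L^\infty}\nrm{u}_{L^\infty}\nrm{\nabla\omega}_{L^1}$, and neither $u\in L^2(\bbR^2)$ nor $\nabla\omega\in L^1(\bbR^2)$ is guaranteed by the hypotheses $\omega_0\in H^3\cap L^1\cap L^\infty$. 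The paper sidesteps this entirely by observing that the left-hand side $\nrm{\omega_L^{\tau(t)} - \omega_L^{\tau(t')}}_{L^1}$ is supported in the bounded set $D_+^{\tau(t)}\cup D_+^{\tau(t')}$ of measure at most $\pi$, and therefore only $\nrm{\omega(t)-\omega(t')}_{L^1(D_+^{\tau(t)}\cup D_+^{\tau(t')})} \leq 2|D_+|\cdot\nrm{\partial_t\omega}_{L^\infty}\cdot|t-t'|$ is needed; here $\nrm{\partial_t\omega}_{L^\infty}\leq\nrm{u}_{L^\infty}\nrm{\nabla\omega}_{L^\infty}$ is finite on $[0,T]$ by $H^3$ propagation (which gives $\nabla\omega\in L^\infty$ via Sobolev embedding). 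This localization is the missing ingredient; once inserted, your argument is complete and matches the paper's.
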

\begin{proof}
\begin{enumerate}

\item Denote
$$ {D_+}:=\{x\in\mathbb{R}^2_+\,|\, |x|<1\},$$
$$ {D_+^{\tau}}:=\{x\in\mathbb{R}^2_+\,|\, |x-\tau e_{x_1}|<1\},$$
 for $\tau\in\mathbb{R}$, 
$$\omega_L^{\tau}:=\omega_L(\cdot-\tau e_{x_1})$$
and $$f(\tau):=\|\omega_L-\omega_L^{\tau}\|_{L^1(\mathbb{R}^2_+)}.$$ 
Then
 $f$ is continuous on $\mathbb{R}$,
$$f(0)=0,$$
$$f(\tau)>0,\quad |\tau|>0,$$
$$f(\tau)=c_f:=2\|\omega_L\|_{L^1(\mathbb{R}^2_+)}>0,\quad |\tau|\geq 2.$$
In particular, due to \eqref{claim_lamb_1} of Lemma \ref{lem:lamb_error}, we have, for any $|\tau|\leq \kappa_0$,
\begin{equation}\label{claim_lamb_2}
f(\tau)\geq 2 c_0|\tau|^3,
\end{equation} where $\kappa_0\in(0,1/2)$ is the constant from the lemma.
We set 
$F:[0,\infty)\to\mathbb{R}$ by
$$F(s)=\inf_{|\tau|\geq s}f(\tau).$$ Then $F$ is continuous,   non-decreasing and satisfies
$$F(0)=0, \quad 0<F(s)<c_f\quad \mbox{for}\quad  s>0,
 \quad F(s)=c_f\quad \mbox{for}\quad  s\geq 2.$$
 We take $\varepsilon_1>0$ satisfying
 \begin{equation}\label{F}
 F(\kappa_0)= 4\varepsilon_1.
 \end{equation} 


\item For any $\omega_0\in L^1\cap L^\infty$ with 
$\omega_0\in H^3(\mathbb{R}^2)$,
we denote $h_{\omega_0}:[0,\infty)\to\mathbb{R}$ by
$$
h_{\omega_0}(T):=\sup_{t\in[0,T]}\|\partial_t\omega(t)\|_{L^\infty},\quad T\geq0,
$$ where $\omega$ is the corresponding solution for the initial data $\omega_0$. It is well-defined since   the assumption \eqref{assum_lem_h3} guarantees
$$\sup_{t\in[0,T]}\|\nabla\omega(t)\|_{L^\infty}<\infty$$  for any finite $T>0$.
Let's suppose that there is  a function $\tau:[0,\infty)\to\mathbb{R}$ 
satisfying 
\begin{align*}
\sup_{t\geq 0}\left\{\left\|\omega(t)-\omega_{L}^{\tau(t)} \right\|_{ L^1
}
\right\}\leq \varepsilon 
\end{align*}  for some $\varepsilon\in(0,\varepsilon_1)$.
Then for any fixed $T>0$, we can compute for any $t,t'\in[0,T]$,
  \begin{align*}
  & \|\omega_L^{\tau(t')}-\omega_L^{\tau(t)}\|_1
 =\|\omega_L^{\tau(t')}-\omega_L^{\tau(t)}\|_{L^1(D_+^{\tau(t')}\cup D_+^{\tau(t)})}\\
&\quad \leq  \|\omega_L^{\tau(t')}-\omega(t')\|_{L^1(D_+^{\tau(t')}\cup D_+^{\tau(t)})}+  \|\omega(t')-\omega(t)\|_{L^1(D_+^{\tau(t')}\cup D_+^{\tau(t)})}+  \|\omega(t)-\omega_L^{\tau(t)}\|_{L^1(D_+^{\tau(t')}\cup D_+^{\tau(t)})}\\
 &\quad \leq 2\varepsilon+\|\omega(t')-\omega(t)\|_{L^1(D_+^{\tau(t')}\cup D_+^{\tau(t)})} \leq 2\varepsilon+2|D_+|\cdot \|\omega(t')-\omega(t)\|_{L^\infty}  \\\
 &\quad \leq 2\varepsilon+2|D_+| \cdot h_{\omega_0}(T)\cdot |t'-t|. 
\end{align*} Thus,
by choosing
$$c_{\omega_0,T}:=\left(2|D_+| \cdot h_{\omega_0}(T)\right)^{-1}>0,\quad T>0,$$
we conclude that
 for any $t,t'\in[0,T]$, 
the condition $$|t-t'|\leq c_{\omega_0,T}\cdot\varepsilon$$ implies 
  \begin{align*}
  F(|\tau(t')-\tau(t)|)\leq f(\tau(t')-\tau(t))=\|\omega_L -\omega_L^{\tau(t')-\tau(t)}\|_{L^1}= \|\omega_L^{\tau(t')}-\omega_L^{\tau(t)}\|_{L^1}
  \leq 3\varepsilon\leq 3\varepsilon_1< 4\varepsilon_1.
\end{align*}  In this case, we obtain 
  $$|\tau(t')-\tau(t)|\leq \kappa_0$$ due to
\eqref{F}  
   so that we can use \eqref{claim_lamb_2} to conclude
  $$
  2 c_0|\tau(t')-\tau(t)|^3\leq f(\tau(t')-\tau(t))\leq 3\varepsilon.
  $$ Thus choosing
  $\tilde{K}:= \left(3/(2c_0)\right)^{1/3}$, we are done.

 \end{enumerate}
 \end{proof}

\subsection{The key proposition and the proof of Theorem \ref{thm_lamb_lower}.}

\begin{proposition}[Traveling speed]\label{prop:travel-speed_lamb} 
There exist absolute constants ${\varepsilon_0}, K>0$ such that for each $M>0$, there exists a constant $\alpha_0=\alpha_0(M)>0$ such that for any  $\varepsilon\in(0,{\varepsilon_0})$ and
for any   $\omega_0\in L^1\cap L^\infty(\mathbb{R}^2)$ with $x_2\omega_0\in L^1$,
$$\omega_0\geq 0 \quad\mbox{on}\quad \mathbb{R}^2_+,\quad\omega_0:\,\mbox{odd-symmetric with respect to $x_1$-axis}$$
 and
 $$ \|\omega_0\|_{L^\infty}\leq M,$$ 
 {if
\begin{align}\label{assum_prop}
 \left\|\omega_0-\omega_{L} \right\|_{L^1\cap L^2}+\left\|x_2(\omega_0-\omega_{L}) \right\|_{L^1}\leq \frac{1}{2}\delta,
\end{align} where $\delta=\delta(\varepsilon)>0$ is the constant from Corollary \ref{cor_lamb},  
then any shift function $\tau(t)$ satisfying \eqref{assum_lem_stab} of Corollary \ref{cor_lamb} }
 satisfies
  \begin{equation}\label{eq:travel-est_lamb} 
		\begin{split}
			|\tau(t + \alp ) - \tau(t) - W_L \alp  | \le {K}\varep^{1/3} 
		\end{split}
	\end{equation}   for any $t\geq0$ and for any $\alp \in [0,\alp_0]$.
	 
\end{proposition}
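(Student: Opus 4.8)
\noindent\emph{Plan.} The idea is to track a \emph{localized} first moment of $\omega$ along the $x_1$--axis, centred at $\tau(t)$: its endpoint values over $[t,t+\alpha]$ detect $\tau(t+\alpha)-\tau(t)$, while its time derivative is essentially $\int_{\mathbb{R}^2_+}u_1\omega$, which \eqref{assum_lem_stab} forces to be close to $\int_{\mathbb{R}^2_+}u_L^1\omega_L=W_L\nu_L$. Write $\omega_L^{\tau}:=\omega_L(\cdot-\tau e_{x_1})$, $D_+^{\tau}:=\{x\in\mathbb{R}^2_+:|x-\tau e_{x_1}|<1\}$, $\eta(s):=\omega(s)-\omega_L^{\tau(s)}$, $\nu_L:=\int_{\mathbb{R}^2_+}\omega_L$ and $\nu:=\int_{\mathbb{R}^2_+}\omega_0$; let $u_1[f]$ denote the first component of $\mathcal K[f]$. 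Since $\omega_0\in L^\infty$ the (unique) weak solution is the Yudovich solution, transported by the area--preserving flow that leaves $\mathbb{R}^2_+$ invariant (as $u_2=0$ on $\{x_2=0\}$); hence $\|\omega(s)\|_{L^\infty}\equiv\|\omega_0\|_{L^\infty}\le M$ and $\|\omega(s)\|_{L^1(\mathbb{R}^2_+)}\equiv\nu$, while $|\nu-\nu_L|\le\tfrac12\delta<\varepsilon$ (by the choice of $\delta$ in Corollary \ref{cor_lamb}) and $\|\eta(s)\|_{L^1}\le\varepsilon$ for all $s\ge0$. By Biot--Savart, $\|u(s)\|_{L^\infty(\mathbb{R}^2)}\le V_M:=C\sqrt{(\nu_L+1)M}$ uniformly in $s$.

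\noindent\emph{Step 1 (coarse control of $\tau$; choice of $\alpha_0$).} Since $\omega_L^{\tau(s)}$ has mass $\nu_L$ and support in $\overline{D_+^{\tau(s)}}$, the $L^1$--bound gives $\int_{D_+^{\tau(s)}}\omega(s)\ge\nu_L-\varepsilon$; transporting the mass of $D_+^{\tau(t)}$ along the flow gives $\int_{B(\tau(t)e_{x_1},\,1+V_M(s-t))}\omega(s)\ge\nu_L-\varepsilon$ (using $\omega(s)\ge0$ on $\mathbb{R}^2_+$). Were $D_+^{\tau(s)}$ disjoint from that ball, adding the two estimates would give $\nu\ge2(\nu_L-\varepsilon)$, impossible once $\varepsilon<\varepsilon_0:=\min\{1,\nu_L/3\}$. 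Choosing $\alpha_0=\alpha_0(M):=\min\{1,1/V_M\}$, so that $V_M(s-t)\le1$ for $s-t\le\alpha_0$, we conclude
\[
  |\tau(s)-\tau(t)|\le3\qquad\text{for all }t\ge0,\ s\in[t,t+\alpha_0].
\]
Fix $R:=5$ and $\chi\in C_c^\infty(\mathbb{R}^2)$ with $\chi\equiv1$ on $B(0,1)$, $\chi\equiv0$ off $B(0,2)$, $0\le\chi\le1$.

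\noindent\emph{Step 2 (the localized moment and its drift).} For fixed $t\ge0$ and $\alpha\in[0,\alpha_0]$, let $\phi(x):=(x_1-\tau(t))\,\chi\!\left(\tfrac{x-\tau(t)e_{x_1}}{R}\right)\in C_c^\infty(\mathbb{R}^2)$, so $|\phi|\le2R$, and set $Y(s):=\int_{\mathbb{R}^2_+}\phi\,\omega(s)$ for $s\in[t,t+\alpha]$. By Step 1 the supports of $\omega_L^{\tau(t)}$ and $\omega_L^{\tau(t+\alpha)}$ lie in $B(\tau(t)e_{x_1},4)\subset\{\chi(\tfrac{\cdot-\tau(t)e_{x_1}}{R})\equiv1\}$, so comparing $\omega(s)$ with $\omega_L^{\tau(s)}$ for $s\in\{t,t+\alpha\}$,
\[
  \Big|\,Y(s)-\int_{\mathbb{R}^2_+}(x_1-\tau(t))\,\omega_L^{\tau(s)}\,\Big|\le2R\,\|\eta(s)\|_{L^1}\le2R\varepsilon ,
\]
and a translation of variables gives $\int_{\mathbb{R}^2_+}(x_1-\tau(t))\omega_L^{\tau(s)}=(\tau(s)-\tau(t))\nu_L+\int_{\mathbb{R}^2_+}y_1\omega_L$, where the last term is independent of $s$ and cancels; hence $Y(t+\alpha)-Y(t)=\nu_L\bigl(\tau(t+\alpha)-\tau(t)\bigr)+O(\varepsilon)$. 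On the other hand, by the Lagrangian formulation (no boundary term, the flow preserving $\mathbb{R}^2_+$), $\tfrac{d}{ds}Y(s)=\int_{\mathbb{R}^2_+}\nabla\phi\cdot u(s)\,\omega(s)=A(s)+B(s)$ with $A(s)=\int\chi u_1\omega$ and $B(s)=\tfrac1R\int(x_1-\tau(t))(\nabla\chi)(\tfrac{\cdot-\tau(t)e_{x_1}}{R})\cdot u\,\omega$. Both $(\nabla\chi)(\tfrac{\cdot-\tau(t)e_{x_1}}{R})$ and $1-\chi(\tfrac{\cdot-\tau(t)e_{x_1}}{R})$ are supported in $\{|x-\tau(t)e_{x_1}|\ge R\}$, where $\omega_L^{\tau(s)}=0$ and hence $\int_{\{|x-\tau(t)e_{x_1}|\ge R\}}|\omega(s)|\le\|\eta(s)\|_{L^1}\le\varepsilon$; so $|B(s)|\lesssim V_M\varepsilon$ and $|A(s)-\int_{\mathbb{R}^2_+}u_1(s)\omega(s)|\lesssim V_M\varepsilon$. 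Finally $\int_{\mathbb{R}^2_+}u_1(s)\omega(s)=\int u_1[\omega_L^{\tau(s)}]\omega_L^{\tau(s)}+\int u_1[\omega_L^{\tau(s)}]\eta(s)+\int u_1[\eta(s)]\omega(s)$: the first term equals $\int_{\mathbb{R}^2_+}u_L^1\omega_L=W_L\nu_L$ by translation invariance of $\mathcal K$ and of $\mathbb{R}^2_+$, the second is $\lesssim\|u_L^1\|_{L^\infty}\varepsilon\lesssim\varepsilon$, and the third is $\lesssim\|u[\eta(s)]\|_{L^\infty}\lesssim\|\eta(s)\|_{L^1}^{1/2}\|\eta(s)\|_{L^\infty}^{1/2}\lesssim\sqrt M\,\sqrt\varepsilon$. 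Thus $|A(s)-W_L\nu_L|\lesssim V_M\varepsilon+\sqrt M\,\sqrt\varepsilon$, and integrating over $[t,t+\alpha]$—where $\alpha V_M\le1$ and $\alpha\sqrt M$ is absolutely bounded since $V_M\sim\sqrt M$ and $\alpha\le\alpha_0(M)$—gives $Y(t+\alpha)-Y(t)=W_L\nu_L\,\alpha+O(\sqrt\varepsilon)$. Comparing with the endpoint formula and dividing by $\nu_L$ yields $|\tau(t+\alpha)-\tau(t)-W_L\alpha|\lesssim\sqrt\varepsilon\le K\varepsilon^{1/3}$ for $\varepsilon<1$, with $K,\varepsilon_0$ absolute (the exponent $1/3$ is weaker than necessary and only matches Lemma \ref{lem:no_high_jump}).

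\noindent\emph{Main obstacle.} For general $\omega_0\in L^1\cap L^\infty$ the true first moment $\int_{\mathbb{R}^2_+}x_1\omega$ may be infinite, so the exact center--of--mass identity is unavailable and localization is forced; the cutoff error $B(s)$ is affordable only because $\omega(s)$ is $L^1$--concentrated near $\tau(s)$, which itself relies on Step 1. The delicate part is the calibration: taking the cutoff radius $R$ absolute and $\alpha_0(M)$ of order $M^{-1/2}$ so that every error is either $O(\varepsilon)$ with an absolute constant or carries a factor $\alpha\le\alpha_0(M)$ that cancels the $\sqrt M$ loss, which is exactly what keeps $\varepsilon_0$ and $K$ independent of $M$.
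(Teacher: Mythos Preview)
Your argument is correct, and it is genuinely different from---and considerably simpler than---the paper's proof.

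The paper proceeds by a trajectory bootstrap: under a temporary $H^{3}$--smoothness assumption (to have Lipschitz--in--time vorticity, hence Lemma~\ref{lem:no_high_jump}), it tracks the image $\phi(t,(t_0,D_+^{\tau(t_0)}))$ and compares $\phi$ with the Lamb flow $\phi_L$ via Gr\"onwall. The claim (B1') on $\tau$ is then forced by Lemma~\ref{lem:lamb_error}, whose cubic lower bound $\int_{\{x_1\ge 1-\kappa\}}\omega_L\ge c_0\kappa^{3}$ is precisely what produces the exponent $\varepsilon^{1/3}$; finally the $H^3$ assumption is removed by mollification. In contrast, you use a localized first moment $Y(s)=\int\phi\,\omega(s)$ with a fixed cutoff centred at $\tau(t)$: the endpoint values of $Y$ detect $\tau(t+\alpha)-\tau(t)$ up to $O(\varepsilon)$, while the drift $\dot Y=\int\nabla\phi\cdot u\,\omega$ is close to $\int u_L^1\omega_L=W_L\nu_L$ by the same $L^1$--stability. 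This bypasses Lemmas~\ref{lem:lamb_error} and \ref{lem:no_high_jump}, needs no extra smoothness or mollification, and actually yields the sharper bound $|\tau(t+\alpha)-\tau(t)-W_L\alpha|\lesssim\varepsilon^{1/2}$. The paper itself remarks (just before the statement of the theorem) that the center--of--mass identity would simplify the proof if $\omega_0$ were compactly supported in $x_1$; your cutoff is exactly the device that removes this restriction, since the $L^1$--stability keeps all but $O(\varepsilon)$ of the mass inside the fixed ball $B(\tau(t)e_{x_1},R)$ throughout $[t,t+\alpha_0]$ by your Step~1. The only cost is that you do not obtain the trajectory containment (B2') as a byproduct---but the paper uses (B2') solely to prove (B1'), so nothing is lost downstream.
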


\begin{proof} 

\begin{enumerate}

\item  We denote, for $\tau\in\mathbb{R}$ and $a>0$,
$$ {D_+^{\tau,a}}:=\{x\in\mathbb{R}^2_+\,|\, |x-\tau e_{x_1}|<a\},\quad 
 {D_+^{\tau}}:= {D_+^{\tau,1}}.$$ We recall  that $\omega_L(t,x):=\omega_L(x-tW_Le_{x_1})$ (with some abuse of notation) is a traveling wave solution of \eqref{2d_euler}. The velocity of  Lamb's dipole 
 $$u_L(t, x):=\mathcal{K}[\omega_{L}(t,\cdot_x)](x)=\mathcal{K}[\omega_{L}(\cdot_x-tW_Le_{x_1})](x)$$
  is Lipschitz in space-time $\mathbb{R}^2\times\mathbb{R}_{\geq0}$. Say $C_{Lip}>0$ be the Lipschitz constant. We denote
  the particle trajectory map $\phi_L$ obtained from solving the following ODE system:\\
	$$\frac{d}{dt}\phi_L(t,(t_0,x))=u_L(t,\phi_L(t,(t_0,x)))\quad\mbox{for }t>0\quad\mbox{and } \quad \phi_L(t_0,(t_0,x))=x\in\mathbb{R}^2,$$
	
  Then, we simply observe
$$ \phi_L(t,(t_0,D_+^{W_Lt_0}))= D_+^{W_Lt}, \quad t,t_0\geq 0.
$$

\item Let 
${\varepsilon_0}\in(0,\min(\varepsilon_1,1/2))$ and set $K>0$ large enough so that 
\begin{equation}\label{take_K}
K\geq 4(\tilde{K}+1)\quad\mbox{and}\quad  c_0((1/16)K)^3\geq 3,
\end{equation}
 where $\varepsilon_1, \tilde{K}>0$ are the   constants from Lemma \ref{lem:no_high_jump} and $c_0>0$ is the absolute constant
 from \eqref{claim_lamb_1} from Lemma \ref{lem:lamb_error}. In the sequel, ${\varepsilon_0}>0$ will be chosen small enough.\\
 
Let $\omega_0$ satisfy the assumptions in this proposition   for some $\varepsilon\in(0,{\varepsilon_0})$. Then, by Corollary \ref{cor_lamb},  the corresponding solution satisfies \eqref{assum_lem_stab} for some shift function $\tau(t)$. Let's say $u(t):=\mathcal{K}[\omega(t)]$ be the corresponding velocity.\\

We first prove \eqref{eq:travel-est_lamb} by assuming an extra condition
\begin{equation}\label{assump_arti}
 \omega_0\in H^3(\mathbb{R}^2) 
\end{equation}  
while it will be shown clearly in the proof  that all the resulting constants $\alpha_0, {\varepsilon_0}, K>0$ are not relevant quantitatively to the artificial assumption \eqref{assump_arti}. At the end, we will remove this artificial assumption.
	
\medskip
	
\item Let $T>0$ be arbitrary.  We will carefully trace dependence coming from $T$ (if any) so that the resulting constants $\alpha_0, {\varepsilon_0}, K>0$ are independent of the choice of $T>0$.
Fix some $t_0\in[0,T]$ and $\tau(t_{0})$.  
We shall consider the flow of the half-disk region $D_+^{\tau(t_0)}$ via $u(t,\cdot)$.\ \\

Our proof is done once we show the following statement:	\ \\

	\noindent \textbf{Bootstrap hypotheses}.     We have, for all $t \in [t_0,t_0+\alp_0 ]$,  \begin{equation} \tag{B1'} 
		\begin{split}
			|\tau(t) - (\tau(t_0) + W_L(t-t_0))| \le K \varep^{1/3}
		\end{split}
	\end{equation}  and 
	 \begin{equation}\tag{B2'}
		\begin{split}
\phi(t,(t_0,D_+^{\tau(t_0)}))\subset D_+^{\tau(t_0)+W_L(t-t_0),\, 1+ (1/2)K\varepsilon^{1/3}}, 
		\end{split}
	\end{equation}  where 
	$\phi$ is the particle trajectory map obtained from solving the following ODE system:\\
	$$\frac{d}{dt}\phi(t,(t_0,x))=u(t,\phi(t,(t_0,x)))\quad\mbox{for }t>0\quad\mbox{and } \quad \phi(t_0,(t_0,x))=x\in\mathbb{R}^2,$$
	
	We note that the hypotheses are simply valid at $t = t_0$. \ \\
	
\item	We first prove the following claim:
	
	\medskip
	
	\indent  \textbf{Initial claim}. 
There exists	 a constant  $\eta>0$ depending only on 
$	 \omega_0,T, \varepsilon, W_L$ such that,  for any $t\in[t_0,t_0+\eta]\cap[0,T]$, we have
\begin{equation} \tag{B1-} 
		\begin{split}
			|\tau(t) - (\tau(t_0) + W_L(t-t_0))| \le \frac{1}{2}K \varep^{1/3}
		\end{split}
	\end{equation}  
	and 
	 \begin{equation}\tag{B2-}
		\begin{split}
\phi(t,(t_0,x))\in D_+^{\tau(t_0)+W_L(t-t_0),\, 1+ (1/2)K\varepsilon^{1/3}},\quad
x\in D_+^{\tau(t_0)}.
		\end{split}
	\end{equation} 

	\medskip
	The above claim is essentially done by  Lemma \ref{lem:no_high_jump}. Indeed, the lemma says
	$$	|\tau(t) - \tau(t_0)  | \leq \tilde{K}\varepsilon^{1/3}$$
	whenever $t\in[t_0,t_0+ c_{\omega_0,T}\cdot \varepsilon]\cap [0,T]$. Here, $c_{\omega_0,T}>0$ is the constant from Lemma \ref{lem:no_high_jump}. Thus, on this interval,
	$$	|\tau(t) - (\tau(t_0) + W_L(t-t_0))| \leq \tilde{K}\varepsilon^{1/3} +W_L(t-t_0). 
	$$
We take a constant $\eta>0$ so that
$$\eta\leq c_{\omega_0,T}\cdot \varepsilon\quad\mbox{and}\quad
W_L\eta\leq \tilde{K}\varepsilon^{1/3}. 
$$ This choice of $\eta$ guarantees that   (B1-) holds for any $t\in[t_0,t_0+\eta]\cap [0,T]$ due to 
$$	|\tau(t) - (\tau(t_0) + W_L(t-t_0))| \leq 2\tilde{K}\varepsilon^{1/3} 
\leq \frac 1 2 K\varepsilon^{1/3}.   $$
 
For the claim for (B2-), we note that the flow speed is uniformly bounded: 
\begin{equation}\label{unif_speed}
\|
u(t)
\|_{L^\infty}\leq C\|\omega(t)\|_{L^1}^{1/2}\|\omega(t)\|_{L^\infty}^{1/2}
= C\|\omega_0\|_{L^1}^{1/2}\|\omega_0\|_{L^\infty}^{1/2}=:C_{\omega_0}.
\end{equation}
Thus, for any $x_0\in D_+^{\tau(t),1}$, we have, for any $t\in\mathbb{R}$,
$$
x:=\phi(t,(t_0,x_0))\in D_+^{\tau(t),\, 1+ C_{\omega_0}|t-t'|}.$$ Then, we compute
\begin{equation*}
	\begin{split}
		|x-(\tau(t_0)+W_L(t-t_0))e_{x_1}|&\leq |x-\tau(t_0)e_{x_1}|+W_L|t-t_0|\\
		&\leq  \left(1+ C_{\omega_0}|t-t'|\right) +W_L|t-t_0|\\
		&=  1+ (C_{\omega_0}+W_L)|t-t'|.
	\end{split}
\end{equation*} 
Once we make $\eta>0$ smaller than before 
 (if necessary) in order to have
 \begin{equation}\label{take_eta}
 (C_{\omega_0}+W_L)\eta \leq (7/16)K\varepsilon^{1/3},
 \end{equation}
we obtain   (B2-) for the short interval $[t_0,t_0+\eta]\cap [0,T]$.\\

\item From now on, we may assume that (B1') and (B2') are valid for all $t \in [t_0,t^*]$ with some $t^*>t_0$. The existence of such $t^*$ is guaranteed by Initial claim (B1-) and (B2-).  We shall prove the following claim: \ \\

\indent  \textbf{Bootstrap claim}. For each $M$, there exists a small  constant $\alp_0>0$ depending \textit{only} on $M$ such that if $t^* \leq  t_0 + \alp_0 
$, 
 then actually (B1') and (B2') hold for any $t\in[t_0,t^*]$ with coefficient constants (of $K$)  $1$ and $1/2$ replaced with $1/8$ and $1/16$, respectively. \textit{i.e.} we claim, for $t\in[t_0,t^*]$,
\begin{equation} \tag{B1*} 
		\begin{split}
			|\tau(t) - (\tau(t_0) + W_L(t-t_0))| \le \frac{1}{8}K \varep^{1/3}
		\end{split}
	\end{equation}  and 
	 \begin{equation}\tag{B2*}
		\begin{split}
\phi(t,(t_0,D_+^{\tau(t_0)}))\subset D_+^{\tau(t_0)+W_L(t-t_0),\, 1+ (1/16)K\varepsilon^{1/3}}.
		\end{split}
	\end{equation} 

To verify   Bootstrap claim  (B2*), we fix
any $x_0\in D_+^{\tau(t_0)}$ and
compute with $\phi(t)=\phi(t,(t_0,x_0))$ that 
\begin{equation*}
\begin{split}
	\frac{d}{dt} \phi(t)    = u(t,\phi(t) ) &= u(t,\phi(t) ) - u_L(W_L^{-1}{\tau(t)}, \phi(t)) \\
	&\qquad  + u_L (W_L^{-1}{\tau(t)},\phi(t)) - u_L(W_L^{-1}{\tau(t_0)}+(t-t_0),\phi(t))  \\
	&\qquad + u_L(W_L^{-1}{\tau(t_0)}+(t-t_0),\phi(t))\\
& =:I(t)+II(t)+III(t).
\end{split}
\end{equation*}
From the stability assumption  \eqref{assum_lem_stab}, we have 
\begin{equation*}
	\begin{split}
	|I(t)|&
=|\mathcal{K}[\omega(t)](\phi(t)) -\mathcal{K}[\omega_{L}(\cdot_x-\tau(t)e_{x_1})](\phi(t))|\\ &	
	\leq
\|\mathcal{K}[\omega(t)-\omega_{L}(\cdot_x-\tau(t)e_{x_1})]\|_{L^\infty}\\ &	
	\leq C\|\omega(t)-\omega_{L}(\cdot_x-\tau(t)e_{x_1})\|_{L^1}^{1/2}\|\omega(t)-\omega_{L}(\cdot_x-\tau(t)e_{x_1})\|_{L^\infty}^{1/2}\\ &  \leq C_1(\|\omega_0\|_{L^\infty}+1)^{1/2} \varep^{1/2},\quad t\geq 0,
	\end{split}
\end{equation*} where $C_1>0$ is an absolute constant.
For $II(t)$, we use Lipschitzness (in space-time) of $u_L$ and the hypotheses (B1') on $[t_0,t^*]$ to get,
for any $t\in[t_0,t^*]$,
\begin{equation*}
	\begin{split}
	|II(t)|&\leq
C_{Lip}	\cdot |
	 W_L^{-1}{\tau(t)} -\left(W_L^{-1}{\tau(t_0)}+(t-t_0)\right)|\\
	 &=C_{Lip}W_L^{-1}\cdot |\tau(t) - (\tau(t_0) + W_L(t-t_0))| \le C_{Lip}W_L^{-1}\cdot K \varep^{1/3}.
	\end{split}
\end{equation*}
We denote
$$\psi(t):=\phi_L(W_L^{-1}{\tau(t_0)}+(t-t_0),(W_L^{-1}{\tau(t_0)},x_0)).$$ Then $\psi$ satisfies 
$$\psi(t_0)=x_0\quad\mbox{and}\quad  \psi(t)\in D_+^{\tau(t_0)+W_L(t-t_0),\, 1}\quad\mbox{for any} \quad  t\geq t_0$$ and
\begin{equation*}
	\begin{split}\frac{d}{dt}\psi(t)&=
\frac{d}{dt}\phi_L(W_L^{-1}{\tau(t_0)}+(t-t_0),(W_L^{-1}{\tau(t_0)},x_0))\\&
=u_L(W_L^{-1}{\tau(t_0)}+(t-t_0),\phi_L(W_L^{-1}{\tau(t_0)}+(t-t_0),(W_L^{-1}{\tau(t_0)},x_0)))\\
&=u_L(W_L^{-1}{\tau(t_0)}+(t-t_0),\psi(t)). \end{split}
\end{equation*}
Using the above bounds and comparing the equations for $\phi$ and $\psi$, we see that, for $t\in[t_0,t^*]$, \begin{equation*}
\begin{split}
	\frac{d}{dt}|\phi(t) - \psi(t)| & \le |u_L(W_L^{-1}{\tau(t_0)}+(t-t_0),\phi(t)) -u_L(W_L^{-1}{\tau(t_0)}+(t-t_0),\psi(t)) |  \\
	&\qquad\qquad +C_1(\|\omega_0\|_{L^\infty}+1)^{1/2} \varep^{1/2}+C_{Lip}W_L^{-1}\cdot K \varep^{1/3} \\
	& \le C_{Lip}|\phi(t)-\psi(t)| + C_2\cdot(1+\|\omega_0\|_{L^\infty}) \varep^{1/3}.
\end{split}
\end{equation*} 
With Gronwall's inequality, we deduce for $t\in[t_0,t^*]\subset[t_0,t_0+\alpha_0 ]$ that \begin{equation*}
\begin{split}
	|\phi(t)-\psi(t)|& \le e^{C_{Lip}(t^*-t_0)}\cdot\int_{t_0}^{t^*}    
C_2(1+\|\omega_0\|_{L^\infty})\varep^{1/3} ds. \\
	& \le   \left( e^{ C_{Lip}\alp_0  }C_2(1+M)\alp_0\right)\cdot  \varep^{1/3}.  
\end{split}
\end{equation*} We take $\alp_0>0$ small enough so that
$$
\left(e^{ C_{Lip}\alp_0  }C_2(1+M)  \alp_0\right)\leq \frac 1 {16} K.
$$
Since we know $\psi(t)\in D_+^{\tau(t_0)+W_L(t-t_0),\, 1}$, the above estimate shows
	 \begin{equation*} 
		\begin{split}
\phi(t)\in D_+^{\tau(t_0)+W_L(t-t_0),\, 1+ (1/16)K\varepsilon^{1/3}},
		\end{split}
	\end{equation*} which is Bootstrap claim (B2*) on $[t_0,t^*]$ whenever $t^*\leq t_0+\alpha_0$.\\

	To prove (B1*) on $[t_0,t^*]$ when $t^*\leq t_0+\alpha_0$, we
	denote   $$A_t:=\phi(t,(t_0,D_+^{\tau(t_0)})),\quad  t\geq t_0,
	$$ and decompose$$
	\omega(t,x)=\omega(t,x)1_{A_t}(x)+\omega(t,x)1_{\mathbb{R}^2_+\setminus A_t}(x)=:\Omega^1(t,x)+\Omega^2(t,x).
	$$

As before, we 	denote, for $\tau\in\mathbb{R}$, 
$$\omega_L^{\tau}:=\omega_L(\cdot-\tau e_{x_1}).$$ Then we note $$\|\Omega^2(t)\|_{L^1}=\|\Omega^2(t_0)\|_{L^1}\leq
	\|\omega(t_0)-\omega_L^{\tau(t_0)}\|_{L^1({\mathbb{R}^2_+\setminus A_{t_0}})}\leq
	\|\omega(t_0)-\omega_L^{\tau(t_0)}\|_{L^1}
	\leq  \varepsilon$$ since 	 $\omega_L^{\tau(t_0)}$ is supported in $A_{t_0}=D_+^{\tau(t_0)}$. \\
	
For	a contradiction, let's assume that (B1*) on $[t,t^*]$ with   $t^*\leq t_0+\alpha_0$, fails. \textit{i.e.}  there is some $t'\in[t,t^*]$ satisfying
\begin{equation*}  
		\begin{split}
			|\tau(t') - (\tau(t_0) + W_L(t'-t_0))| > \frac{1}{8}K \varep^{1/3}.
		\end{split}
	\end{equation*} We may assume $$\tau(t') - (\tau(t_0) + W_L(t'-t_0)) > \frac{1}{8}K \varep^{1/3}$$ since the other case can be considered similarly.
Because we already obtained   (B2*) on $[t_0,t^*]$ with   $t^*\leq t_0+\alpha_0$,	 we observe	 
$$A_{t'}\subset  D_+^{\tau(t_0)+W_L(t'-t_0),\, 1+ (1/16)K\varepsilon^{1/3}}\subset 
\{x\in\mathbb{R}^2\,|\, x_1\leq 
\left(\tau(t_0)+W_L(t'-t_0)\right)+\left( 1+ (1/16)K\varepsilon^{1/3}\right)
\}.
$$ 
We make ${\varepsilon_0}>0$ smaller than before (if necessary) to have
\begin{equation}\label{take_e_2}
 (1/16)K{\varepsilon_0}^{1/3}\leq \kappa_0, 
\end{equation}
where $\kappa_0>0$ is the constant in \eqref{claim_lamb_1} from Lemma \ref{lem:lamb_error}.
Thus we can compute	
			 \begin{equation*} 
		\begin{split}
		\|\omega(t')-\omega_L^{\tau(t')}\|_{L^1}&\geq 
		\|\Omega^1(t')-\omega_L^{\tau(t')}\|_{L^1}-\|\Omega^2(t')\|_{L^1}\\
		&\geq \|\Omega^1(t')-\omega_L^{\tau(t')}\|_{L^1(\mathbb{R}^2_+\setminus A_{t'})}-\varepsilon= \| \omega_L^{\tau(t')}\|_{L^1(\mathbb{R}^2_+\setminus A_{t'})}-\varepsilon\\
		&\geq 
		\| \omega_L^{\tau(t')}\|_{L^1(\{x_1>
		\left(\tau(t_0)+W_L(t'-t_0)\right)+\left( 1+ (1/16)K\varepsilon^{1/3}\right)
		\})}-\varepsilon\\
			&\geq 
		\| \omega_L^{\tau(t')}\|_{L^1(\{x_1>
		 \tau(t')+\left( 1- (1/16)K\varepsilon^{1/3}\right)
		\})}-\varepsilon\\
		&=
		\| \omega_L\|_{L^1(\{x_1\geq  1-(1/16)K\varepsilon^{1/3}
		\})}-\varepsilon\\&\geq c_0((1/16)K)^3\varepsilon-\varepsilon=
		\left(c_0((1/16)K)^3-1\right)\varepsilon,
 		\end{split}
	\end{equation*} where the last inequality  and the constant $c_0$ are from   \eqref{claim_lamb_1} of Lemma \ref{lem:lamb_error} (due to \eqref{take_e_2}). 
	Thanks to the assumption \eqref{take_K} on $K$, we conclude
		 \begin{equation*} 
		\begin{split}
		\|\omega(t')-\omega_L^{\tau(t')}\|_{L^1}&\geq 2\varepsilon,
 		\end{split}
	\end{equation*} which is a contradiction to \eqref{assum_lem_stab}. Hence we obtain (B1*) on $[t_0,t^*]$ when   $t^*\leq t_0+\alpha_0$.\\
	
\item	Lastly, we use the  continuity argument to finish the proof since we can extend the valid interval $[t_0,t^*]$
for (B1') and (B2') (so (B1*) and (B2*), too)
 as longer as we want since 
		 we can pour Initial claim (B1-)  	into $[t^*, t^*+\eta]\cap[0,T]$. More precisely,  Initial claim (B1-) implies for any $t\in[t^*,t^*+\eta]\cap[0,T]$,
	\begin{equation*}  
		\begin{split}
			|\tau(t) - (\tau(t^*) + W_L(t-t^*))| \le \frac 1 2 K \varep^{1/3}.
		\end{split}
	\end{equation*}  
We add the above estimate into (B1*) (for $t=t^*$) in order to obtain, for $t\in[t^*,t^*+\eta]\cap[0,T]$,
$$ 
|\tau(t) - (\tau(t_0) + W_L(t-t_0))| \le \frac{1}{8}K \varep^{1/3}+\frac 1 2 K \varep^{1/3}\leq  K \varep^{1/3}.
$$ Thus we have (B1') on the extended interval $[t_0,t^*+\eta]\cap[0,T]$. \\

To extend (B2') up to $[t_0,t^*+\eta]\cap[0,T]$, we recall (B2*) for $t=t^*$:
$$\phi(t^*,(t_0,D_+^{\tau(t_0)}))\subset D_+^{\tau(t_0)+W_L(t^*-t_0),\, 1+ (1/16)K\varepsilon^{1/3}}.$$ Thanks to the uniform bound \eqref{unif_speed} of the flow speed, we have, 
for  $t\in[t^*, t^*+\eta]$,
  	 \begin{equation} \label{comp_short_eat}
		\begin{split}
\phi(t,(t_0,D_+^{\tau(t_0)}))
&\subset \phi(t,(t^*,
 D_+^{\tau(t_0)+W_L(t^*-t_0),\, 1+ (1/16)K\varepsilon^{1/3}})\\
 &\subset   D_+^{\tau(t_0)+W_L(t^*-t_0),\, 1+ (1/16)K\varepsilon^{1/3}+C_{\omega_0}\eta}
		\end{split}
	\end{equation}
	
	We claim   
	$$D_+^{\tau(t_0)+W_L(t^*-t_0),\, 1+ (1/16)K\varepsilon^{1/3}+C_{\omega_0}\eta}\subset D_+^{\tau(t_0)+W_L(t-t_0),\, 1+ (1/2)K\varepsilon^{1/3}}.$$ Indeed, for any $y\in D_+^{\tau(t_0)+W_L(t^*-t_0),\, 1+ (1/16)K\varepsilon^{1/3}+C_{\omega_0}\eta}$, 	we compute
\begin{equation*} 
		\begin{split}
 |y-\left(\tau(t_0)+W_L(t-t_0)\right)e_{x_1}|&\leq
 |y-\left(\tau(t_0)+W_L(t^*-t_0)\right)e_{x_1}|+W_L|t^*-t|\\ 
  &\leq  1+ (1/16)K\varepsilon^{1/3}+C_{\omega_0}\eta+W_L\eta.
		\end{split}
	\end{equation*}	
		Since we assumed the condition \eqref{take_eta}, we obtained the above claim.\\
		
 Together with
\eqref{comp_short_eat}, it implies,
	  for $t\in[t^*, t^*+\eta]$,
	$$\phi(t,(t_0,D_+^{\tau(t_0)}))\subset  D_+^{\tau(t_0)+W_L(t-t_0),\, 1+ (1/2)K\varepsilon^{1/3}},$$ which gives (B2') on the extended interval $[t_0,t^*+\eta]\cap[0,T]$.\\
 
	 Next, we obtain  Bootstrap claim (B1*) and (B2*) on the extended interval $[t_0,t^*+\eta]\cap[0,T]$. Then we simply repeat the same process above to get that (B1*) and (B2*) are valid on $[t_0,t^*+k\eta]\cap[0,T]$ for any $k\in\mathbb{N}$ until (B1') and (B2') on the \textit{full} interval $[t_0,t_0+\alpha_0]\cap [0,T]$ can be covered. Since we chose $T>0$ arbitrary while our chosen constants   $\alp_0, {\varepsilon_0}, K$ are not relevant to the choice of $T$,  we have proved both (B1') and (B2') for \textit{all} $t_0\geq 0$ whenever $t\in[t_0,t_0+\alpha_0]$.
	
	\item It remains to remove the artificial assumption $\omega_0\in H^3(\mathbb{R}^2)$ in \eqref{assump_arti}. It is not a difficult task since we have chosen   the   constants $\alpha_0, {\varepsilon_0}, K>0$ so that they are not quantitatively related to the assumption. We present a proof below for completeness. \ \\
	
	For a  general data $\omega_0$ under the hypotheses of this proposition with the corresponding solution $\omega(t)$, we simply  take a sequence $\omega_{n,0}\in H^3\cap L^1(\mathbb{R}^2)$ by 
	$$\omega_{n,0}:=\Phi_{1/n}*\omega_0,$$
	where $\Phi\in C_c^\infty(\mathbb{R}^2)$ is a radial, non-negative function with $\int \Phi dx=1$ and $\Phi_a(x):=(1/a^2) \Phi(x/a)$ for $a>0$. In addition, we assume 
$\Phi$ is non-increasing in the radial direction.	
Then, we note that
 $$x_2\omega_{n,0}\in L^1,\quad \omega_{n,0}\geq 0 \quad\mbox{on}\quad \mathbb{R}^2_+,\quad\omega_{n,0}:\,\mbox{odd-symmetric with respect to $x_1$-axis.}$$
In particular, we have  
	 	\begin{equation}\label{artificial}
		 \|\omega_{n,0}\|_{L^\infty}\leq \|\omega_{0}\|_{L^\infty}\leq M. 	
	\end{equation}
	The assumption \eqref{assum_prop} implies that for sufficiently large $n$,
	\begin{align}\label{mid_assum_prop}
 \left\|\omega_{n,0}-\omega_{L} \right\|_{L^1\cap L^2}+\left\|x_2(\omega_{n,0}-\omega_{L}) \right\|_{L^1}\leq  \delta.
\end{align} 
Then we  consider  a sequence of the corresponding solutions $\omega_n(t)$. By a  classical argument (\textit{e.g.} see Proposition 8.2 (ii) in \cite{MB}), 
$\omega_n(t)$ converges in $L^1$ to the unique weak solution $\omega(t)$ for each $t$;
	 \begin{equation}\label{convergence}
	 \lim_{n\to\infty}\|\omega_n(t)-\omega(t)\|_{L^1}=0,\quad t\geq 0.
	 \end{equation}
For each large $n$ satisfying \eqref{mid_assum_prop}, we take a shift function  $\tau_n(\cdot)$ satisfying the stability estimate
\begin{align*}
\sup_{t\geq 0}\left\{\left\|\omega_n(t)-\omega_{L}(\cdot-\tau_n(t) e_{x_1}) \right\|_{ L^1\cap L^2}+\left\|x_2(\omega_n(t)-\omega_{L}(\cdot-\tau_n(t) e_{x_1})) \right\|_{L^1}\right\}\leq \varepsilon. 
\end{align*} 
	 by applying Corollary \ref{cor_lamb} into $\omega_n$. 
	 \item  	 
	 Due to $\omega_{n,0}\in H^3(\mathbb{R}^2)$ and 
\eqref{artificial},	
	  we obtain \eqref{eq:travel-est_lamb} \textit{uniformly} for each large $n$:
$$
			|\tau_n(t + \alp ) - \tau_n(t) - W_L \alp  | \le {K}\varep^{1/3} 
$$  for any $t\geq 0$  and for any $\alpha\in[0,\alpha_0]$. Let $\tau(\cdot)$ be a shift function for the solution $\omega$ satisfying \eqref{assum_lem_stab}. 
As in the proof of Lemma \ref{lem:no_high_jump},
 for each $t\geq 0$, 
 we compute, for each $t\geq0$ and for sufficiently large $n$,
 $$2c_0|\tau_n(t)-\tau(t)|^3\leq \|\omega_L^{\tau_n(t)}-\omega_L^{\tau(t)}\|_{L^1}\leq
 2\varepsilon+\|\omega_n(t)-\omega(t)\|_{L^1}\leq 3\varepsilon,
$$
where the first inequality follows from Lemma \ref{lem:lamb_error}.
 Thanks to \eqref{convergence}, we have the same estimate 
$$
			|\tau(t + \alp ) - \tau(t) - W_L \alp  | \le {K}\varep^{1/3} 
$$ by redefining $K$ sufficiently larger than before. \qedhere \end{enumerate} 
\end{proof}

We are now ready to complete the proof of Theorem  \ref{thm_lamb_lower}.
\begin{proof}[Proof of Theorem \ref{thm_lamb_lower}]

The first part (I) is just  the orbital stability (Corollary \ref{cor_lamb}). \\

The second part (II) is a direct consequence of Proposition \ref{prop:travel-speed_lamb}. Indeed, 
 a simple summation  
of the estimate \eqref{eq:travel-est_lamb} 
  gives \begin{equation*}
	\begin{split}
		|\tau(t) - \tau(0) -W_Lt| \le (K/\alpha_0) (t+\alpha_0)\varepsilon^{1/3}, \qquad t\ge 0. 
	\end{split}
\end{equation*}  Lastly, from Corollary \ref{cor_lamb}, we can take $\tau(0)=0$.
\end{proof}

\subsection{Proof of Theorem  \ref{thm_fil_lamb}}

\begin{proposition}[Trajectories for the perturbation] \label{prop:traj}
{There exists $\varepsilon_1>0$ satisfying the following property:}\\
	Let $\omg_{0}$ satisfy the hypotheses in (I) and (II) from Theorem \ref{thm_lamb_lower}  {for some $\varepsilon<\varepsilon_1$.} Let $\omg(t,x)$ be the unique global solution, with associated flow map $\phi(t,x)$. Then, for any $z = (z_1,z_2)$ with $z_1 < -2$, we have that \begin{equation*}
		\begin{split}
			\tau(t) - \phi^1(t,z) > \frac{W_{L}}{3}t,\qquad t\ge0. 
		\end{split}
	\end{equation*}
\end{proposition}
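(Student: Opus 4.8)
The plan is to track the first coordinate of a far-left particle $z$ with $z_1 < -2$ and show it lags behind the dipole's forward edge by at least $W_L t/3$. The key observation is that $\phi^1(t,z)$ can move forward only through the velocity field $u(t) = \mathcal{K}[\omega(t)]$, which by the stability estimate \eqref{assum_lem_stab} is $L^1$-close to the Lamb velocity $u_L(\cdot - \tau(t)e_{x_1})$; and the Lamb velocity field, being that of a compactly supported vorticity, decays away from its support. So a particle sitting far to the left of the dipole core should barely move forward, while $\tau(t)$ grows like $W_L t$ by Theorem \ref{thm_lamb_lower}(II) (or Proposition \ref{prop:travel-speed_lamb}).

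First I would quantify the smallness of $u^1$ at points well to the left of the core. Write $u(t,x) = \mathcal{K}[\omega(t) - \omega_L^{\tau(t)}](x) + u_L(x - \tau(t)e_{x_1})$. The first term is bounded in $L^\infty$ by $C(\|\omega_0\|_{L^\infty}+1)^{1/2}\varepsilon^{1/2}$ via the standard interpolation $\|\mathcal{K}[f]\|_{L^\infty} \lesssim \|f\|_{L^1}^{1/2}\|f\|_{L^\infty}^{1/2}$, exactly as in the estimate of $I(t)$ in the proof of Proposition \ref{prop:travel-speed_lamb}. For the second term, I need that when $x_1$ is sufficiently far to the left of $\tau(t)$ — say $x_1 \le \tau(t) - 2$ — the first component $u_L^1(x - \tau(t)e_{x_1})$ is strictly less than $W_L/2$ (in fact one can make it as close to $0$ as desired by going far enough left, but a crude bound suffices). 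Outside the unit disk, \eqref{eq:vel1} gives $u_L^1 = r^{-2}\cos 2\theta$ which is $O(r^{-2})$, so for $x$ with $|x - \tau(t)e_{x_1}|$ bounded below away from $1$ and $x_1 - \tau(t)$ negative, $|u_L^1| < W_L/2$; the only delicate region is near the disk boundary, but there $x_1 < \tau(t)$ forces $x$ into the rear half of the disk where one checks $u_L^1$ is controlled (this requires a short inspection of \eqref{eq:vel1} on $r \le 1$, $\cos\theta < 0$).

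Next I would set up a continuity/bootstrap argument. At $t=0$ the solution is close to the Lamb dipole, so $\tau(0) = 0$ and $\phi^1(0,z) = z_1 < -2$, giving $\tau(0) - \phi^1(0,z) > 2 > 0$; in fact the gap is already bounded below. Suppose for contradiction that the conclusion fails at some first time $t_* > 0$, i.e. $\tau(t_*) - \phi^1(t_*,z) = \frac{W_L}{3}t_*$ with the inequality holding strictly on $[0,t_*)$. On $[0,t_*)$ we have $\phi^1(t,z) < \tau(t) - \frac{W_L}{3}t \le \tau(t) - 2$ for $t$ in a suitable range — here I may need to first handle small $t$ separately using the uniform speed bound \eqref{unif_speed}, which controls $|\phi^1(t,z) - z_1| \le C_{\omega_0}t$, keeping $\phi^1$ below $\tau(t) - 2$ as long as $t$ is not too large; but actually the cleanest route is to note that once $\phi^1(t,z) \le \tau(t) - 2$ the velocity bound above gives $\frac{d}{dt}\phi^1(t,z) = u^1(t,\phi(t,z)) \le \frac{W_L}{2} + C(\|\omega_0\|_{L^\infty}+1)^{1/2}\varepsilon^{1/2} \le \frac{2W_L}{3}$ for $\varepsilon < \varepsilon_1$ small enough. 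Meanwhile $\tau$ grows: from \eqref{eq:travel-est_lamb} summed over steps of length $\alpha_0$, $\tau(t) \ge W_L t - (K/\alpha_0)(t+\alpha_0)\varepsilon^{1/3} \ge \frac{11}{12}W_L t - C$ for small $\varepsilon$. Subtracting, $\tau(t) - \phi^1(t,z) \ge \frac{11}{12}W_L t - C - z_1 - \frac{2W_L}{3}t \ge \frac{W_L}{4}t - C - z_1 > \frac{W_L}{3}t$ fails to close directly, so I would instead argue differentially: $\frac{d}{dt}(\tau(t) - \phi^1(t,z)) \ge$ (lower bound on $\dot\tau$, roughly $W_L - O(\varepsilon^{1/3})$, which needs a one-sided version of \eqref{eq:travel-est_lamb}) $- \frac{2W_L}{3} \ge \frac{W_L}{4} > \frac{W_L}{3}\cdot$(?). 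The honest fix is to choose the threshold and constants so that $\frac{d}{dt}(\tau - \phi^1) > \frac{W_L}{3}$ whenever $\tau - \phi^1 \le \frac{W_L}{3}t$, which prevents the gap from ever dropping to $\frac{W_L}{3}t$ after an initial margin; combined with the strict inequality at $t=0$, the continuity argument closes.

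\textbf{Main obstacle.} The delicate point is extracting a \emph{pointwise-in-time lower bound} $\dot\tau(t) \gtrsim W_L$ (or at least an averaged lower bound on short intervals) from Proposition \ref{prop:travel-speed_lamb}, since $\tau$ need not be differentiable and is only defined up to $O(\varepsilon^{1/3})$ ambiguity; one resolves this by working with the increment $\tau(t+\alpha) - \tau(t) \ge W_L\alpha - K\varepsilon^{1/3}$ on intervals of fixed length $\alpha_0$, which for $\alpha_0$ fixed and $\varepsilon$ small gives an effective forward speed $\ge W_L/2$ of the dipole over each such interval — comfortably beating the $\le 2W_L/3 < W_L$ (wait: $2W_L/3 > W_L/2$) — so one must instead balance against the particle's forward speed $\le W_L/2 + O(\varepsilon^{1/2})$, choosing the far-field threshold large enough (depending on the desired gap $W_L/3$) that the particle speed is genuinely below $2W_L/3$ while the dipole advances at rate $\ge 5W_L/6$ per unit interval; the arithmetic $5/6 - 2/3 = 1/6 < 1/3$ shows one needs the particle speed bounded by $W_L/2$, hence the far-field region must be taken where $u_L^1 \le W_L/6$, which is fine since $u_L^1 \to 0$ at spatial infinity — so the real work is just choosing the numerology of thresholds consistently, together with verifying the sign of $u_L^1$ in the rear of the unit disk. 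I expect this bookkeeping, rather than any conceptual difficulty, to be the bulk of the proof.
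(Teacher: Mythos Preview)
Your decomposition of $u^1$ and the overall bootstrap strategy match the paper. However, the ``main obstacle'' you identify---needing a pointwise or short-interval lower bound on $\dot\tau$---is self-imposed by your choice of bootstrap quantity. The paper avoids it entirely with one change: instead of bootstrapping on the gap $\tau(t) - \phi^1(t,z)$ (which drags in the non-differentiable $\tau$), it bootstraps on the \emph{purely trajectory} statement
\[
\phi^1(t,z) - \tfrac{W_L}{2}t < -\tfrac{7}{4}.
\]
This quantity is smooth in $t$. Under this hypothesis, the integrated bound $\tau(t) \ge W_L t - C_M(1+t)\varepsilon^{1/3}$ from \eqref{lamb_main_est} gives $\phi^1(t,z) < \tau(t) - \tfrac{3}{2}$ for small $\varepsilon$, which already places the particle outside the unit disk centred at $\tau(t)e_{x_1}$; the exterior formula in \eqref{eq:vel1} then yields $u_L^1 \le W_L/|y_1-\tau(t)|^2 \le 4W_L/9$, so
\[
\frac{d}{dt}\Bigl(\phi^1(t,z) - \tfrac{W_L}{2}t\Bigr) \le C_M\varepsilon^{1/2} + W_L\Bigl(\frac{1}{|\phi^1-\tau|^2} - \tfrac{1}{2}\Bigr) < 0,
\]
improving the bootstrap to $\phi^1 - \tfrac{W_L}{2}t < z_1 < -2$. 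The desired conclusion $\tau(t) - \phi^1(t,z) > \tfrac{W_L}{3}t$ follows by combining $\phi^1(t,z) < \tfrac{W_L}{2}t - 2$ with \eqref{lamb_main_est} once more. No $\dot\tau$, no increment-over-$\alpha_0$ argument, no separate small-$t$ phase, and no interior-of-disk analysis (your worry about ``the rear of the unit disk'' is moot since the bootstrap forces $|y - \tau(t)e_{x_1}| > 3/2 > 1$).
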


\begin{proof}  Let $\varepsilon_1>0$ satisfy
$\varepsilon_1<\varepsilon_0$, where $\varepsilon_0$ is the constant from Theorem \ref{thm_lamb_lower}.
We denote by $u$ the velocity field associated with the solution $\omg$. We shall track the trajectory of the point $z$ given in the statement. To this end, we observe that for $y = (y_1,y_2)$ satisfying $y_2\ge 0$ and $y_1 < \tau(t) - \frac{3}{2}$ for some $t\ge0$,\begin{equation}\label{eq:u1-bound}
		\begin{split}
			u^1(t,y) &\le |u^1(t,y)-u^1_{L}(t,y-\tau(t)e_{x_1})| + u^{1}_{L}(t,y-\tau(t)e_{x_1}) \\
			& \le C_{M}\varep^{\frac12} + \frac{W_{L}}{|y_1-\tau(t)|^{2}}
		\end{split}
	\end{equation} We have used \eqref{eq:vel1} (in the case $r>1$)
and the interpolation inequality \begin{equation*}
	\begin{split}
		\nrm{u(t,\cdot) - u_{L}(\cdot - \tau(t)e_{x_1})}_{L^{\infty}} \le C\nrm{\omg(t,\cdot)- \omg(\cdot-\tau(t)e_{x_1})}_{L^1}^{1/2}\nrm{ \omg(t,\cdot)- \omg(\cdot-\tau(t)e_{x_1}) }_{L^{\infty}}^{1/2} \le C_M\varep^{\frac12}. 
	\end{split}
\end{equation*} Now, the proposition readily follows from verifying the following bootstrap hypothesis for all $t\ge0$, with $z$ given in the statement. 
	
	\medskip
	
	\noindent \textbf{Bootstrap Assumption}: $\phi^{1}(t,z) - \frac{W_L}{2}t <  { - \frac{7}{4}}$.
	
	\medskip
	
	\noindent This assumption is satisfied at least for some small time interval containing $t = 0$, since $\phi(t,z)$ is continuous in time. We assume that the above holds on $[0,t^*]$ for some $t^*>0$.
	From \eqref{lamb_main_est}, we have \begin{equation*}
		\begin{split}
			\tau(t) \ge W_{L}t - C_M(1+t)\varep^{\frac13}, \quad t \ge 0. 
		\end{split}
	\end{equation*} Using this together with the bootstrap hypothesis, by taking $\varep_1>0$ smaller if necessary, we have that \begin{equation*}
	\begin{split}
		\phi^1(t,z) < \tau(t) - \frac{3}{2}, \quad t \in [0,t^*]
	\end{split}
\end{equation*} and therefore we may use \eqref{eq:u1-bound} on the same time interval; this gives \begin{equation*}
\begin{split}
	\frac{d}{dt} \phi^1(t,z) \le  C_{M}\varep^{\frac12} + \frac{W_{L}}{|\phi^1(t,z)-\tau(t)|^{2}}
\end{split}
\end{equation*} and hence \begin{equation*}
\begin{split}
	\frac{d}{dt} (\phi^1(t,z) - \frac{W_L}{2}t ) \le  C_{M}\varep^{\frac12} + W_{L}\left( \frac{1}{|\phi^1(t,z)-\tau(t)|^{2}} - \frac12\right)  < 0, 
\end{split}
\end{equation*} by taking $\varep_1>0$ smaller if necessary. Integrating in time and recalling that $z_1 < -2$, we conclude that actually on $t \in [0,t^*]$ we have \begin{equation*}
\begin{split}
	\phi^1(t,z) - \frac{W_L}{2} t < -2. 
\end{split}
\end{equation*} This shows that the bootstrap hypothesis is valid for a slightly longer time interval, which proves the hypothesis for all $t\ge 0$. Finally, using this with \eqref{lamb_main_est} again, we deduce that \begin{equation*}
\begin{split}
	\tau(t) - \phi^1(t,z) - \frac{1}{3}W_{L}t > 0
\end{split}
\end{equation*} by taking $\varep_1>0$ smaller if necessary. 
\end{proof}

We are in a position to conclude Theorem \ref{thm_fil_lamb}. 

\begin{proof}[Proof of Theorem \ref{thm_fil_lamb}]
	We consider a sufficiently small $ \varep>0 $ which satisfies $\varep<\varep_1$ where $\varep_1$ is the constant from Proposition \ref{prop:traj}. Furthermore, for $m := \nrm{\omg_L}_{L^{\infty}}$, we consider the region \begin{equation*}
		\begin{split}
			A_{0} = \{ x \in \supp(\omg_L) : \frac{3m}{4} \le \omg_L(x) \}
		\end{split}
	\end{equation*} and assume that $$\varep < \frac{m}{10} \mathrm{area}(A_{0}).$$ Then, for $\dlt=\dlt(\varep)>0$ from Theorem \ref{thm_lamb_lower}, we can take $\omg_{0} \in C^{\infty}_{c}(\bbR^{2})$ with $\nrm{\omg_{0}}_{L^{\infty}} \le m$ satisfying all the assumptions from Theorem \ref{thm_lamb_lower}. In addition, we may require that there is a connected and  { open} set $A_{1} \subset \bbR^2_+$ satisfying the following properties: \begin{enumerate}
	\item[(i)] $A_{1} \supset A_{0}$ and $\mathrm{area}(A_{1})< 10$; 
	\item[(ii)] $A_{1}  \cap \{ x : \omg_{0}(x)  {>} \frac{3m}{4} \}$ is connected;
	\item[(iii)] $\omg_{0}(A_{1}) \ge \frac{m}{2}$ and for $x \in A_{1}$, $\omg_{0}(x) = \frac{m}{2}$ if and only if $x \in \partial A_{1}$;
	\item[(iv)] there exists a point $z = (z_1,z_2) \in  (A_{1})$ with $ z_1<-2$ and $\omg_{0}(z)  {>} \frac{3m}{4}$. 
\end{enumerate}
Let $\omg(t,x)$ be the unique global solution to 2D Euler with initial data $\omg_0$. We set $\phi$ to be the associated flow map. We now consider the image of the regions $A_{1}$ and $A_{0}$ by $\phi$. We claim the following properties for all $t\ge 0$: \begin{itemize}
	\item For $X(t):= \phi(t,A_{1}) \cap \{ x : \omg(t,x)  {>} \frac{3m}{4} \}$, we have   $$\mathrm{diam}_{x_1}(X(t)) := \sup_{x, x' \in X(t) }|x_1 - x'_1| \ge  {\frac{W_L}{3}t-1}.$$
	\item there exists a point $y$ (depending on $t$) such that $y \in \phi(t,A_{0}) \cap \supp( \omg_{L}(\cdot - \tau(t)e_{x_1}) )$. 
\end{itemize}  Indeed, assume towards a contradiction that $\emptyset = \phi(t,A_{0}) \cap \supp( \omg_{L}(\cdot - \tau(t)e_{x_1}) )$ for some $t>0$. But then, since $\omg(t,\phi(t,A_{0})) \ge  {\frac{m}{2}}$, we have that \begin{equation*}
\begin{split}
	\nrm{\omg(t) - \omg_{L}(\cdot - \tau(t)e_{x_1}) }_{L^{1}} \ge  {\frac{m}{2}} |\mathrm{area}(A_0)| > \varep, 
\end{split}
\end{equation*} which contradicts \eqref{lamb_stab}. 
Then, the first item follows from applying the second item together with Proposition \ref{prop:traj} and the assumption (iv) in the above. \\

Let us now conclude growth of the gradient. Since the flow is area preserving, we have from the assumption (i) that \begin{equation*}
	\begin{split}
		\mathrm{area}(X(t)) < \mathrm{area}(\phi(t,A_{1})) = \mathrm{area}(A_{1}) < 10. 
	\end{split}
\end{equation*}  We consider the set $I(t)$ defined by \begin{equation*}
	\begin{split}
		I(t):= \{ x_1 : X(t) \mbox{ intersects } (\{x_1 \} \times   \bbR_+) \}.
	\end{split}
\end{equation*}  By Fubini's theorem and $\mathrm{diam}_{x_1}(X(t)) \ge  {\frac{W_L}{4}t}$ for any  {$t>t_0:=12/W_L$,} there exists $\bar{x}_1 \in I(t)$ such that \begin{equation}\label{est_l1me}
\begin{split}
|\phi(t,A_{1}) \cap  (\{\bar{x}_1 \} \times   \bbR_+)| \le  { \frac{40}{W_L}\cdot\frac{1}{t}. }
\end{split}
\end{equation} Here, $|\cdot|$ denotes the one-dimensional Lebesgue measure. We have used that $X(t)$ is connected, which follows from the assumption (ii). However, since $\bar{x}_1 \in X(t)$, there exists $y \in  \phi(t,A_{1}) \cap  (\{\bar{x}_1 \} \times   \bbR_+)$ such that $\omg(t,y) \ge \frac{3}{4}m$. Moreover, since $\omg(t,\phi(t,\partial A_1)) = \frac{m}{2}$ (assumption (iii)), there exists $y' \in  \phi(t,\partial A_{1}) \cap  (\{\bar{x}_1 \} \times   \bbR_+)$ such that $\omg(t,y') = \frac{m}{2}$. 
 {From \eqref{est_l1me}, we may assume
that $y'$ satisfies 
$$|y_2-y_2'|\leq \frac{40}{W_L}\cdot\frac{1}{t}.$$}
Finally, by the mean value theorem,  we conclude that \begin{equation*}
\begin{split}
	\sup_{z \in [y,y']}|\rd_{x_2}\omg(t, z)| \ge \frac{1}{|y-y'|} \frac{m}{4} \ge \frac{m W_L}{160}t , 
\end{split}
\end{equation*} where $[y,y']$ is the line segment connecting $y$ and $y'$. This gives $\nrm{\nb\omg(t,\cdot)}_{L^{\infty}} \gtrsim t$. Similarly, $\nrm{\omg(t,\cdot)}_{C^{\alp}} \gtrsim t^\alp$ follows. To obtain the  {${L^p}$--growth of $\nb\omg(t,\cdot)$ for $p\geq1$,}  it suffices to observe that  
\begin{equation*}
\begin{split}
	\left| \left\{ x_1 \in I(t) : |\phi(t,A_{1}) \cap  (\{x_1 \} \times   \bbR_+)| \le  {\frac{80}{W_Lt}} \right\} \right| \ge  {\frac{W_Lt}{8}.} 
\end{split}
\end{equation*} This finishes the proof. \end{proof}

\subsection{Active scalar equations}\label{subsec:gSQG}

In this section, we illustrate the main steps in the proof of Proposition \ref{prop:sqg}, which is largely parallel to that of Theorem \ref{thm_fil_lamb}, using the recent existence and stability theorems of Cao--Qin--Zhan--Zou \cite{CQZZ-stab}.

\begin{itemize}
	\item \textbf{Existence of traveling waves.} The authors in \cite{CQZZ-stab} prove existence of a smooth traveling wave solution $\tht_L^{(\alp)}$ to \eqref{eq:alp-SQG} for $0<\alp<1$ in $\bbR^2$ which is non-negative on $\bbR^2_+$, odd symmetric with respect to the $x_1$--axis, and supported precisely in the unit disc; see \cite[Theorem 1.1]{CQZZ-stab}. Similarly as in \cite{AC2019}, the existence statement is obtained by setting up a maximization problem involving conserved quantities for \eqref{eq:alp-SQG}. 
	\item \textbf{Uniqueness and stability.} In \cite[Theorem 1.3]{CQZZ-stab}, the authors demonstrate the orbital stability of $\tht_L^{(\alp)}$ for $0<\alp<1$. {In particular, their stability statement for $1/2<\alpha<1$} is parallel to Theorem \ref{thm_lamb} obtained in \cite{AC2019} for 2D Euler (gives the stability in the same norms ($L^2$+impulse); however, in the $\alp$-SQG case, stability is conditional in the sense that one needs to assume existence of a \textit{conservative} weak solution corresponding to the perturbed initial data. See \cite{CQZZ-stab} for the precise statement. {As in Corollary \ref{cor_lamb}, the stability norm can be replaced with 
	($L^1$+$L^2$+impulse) without any difficulty.} When the perturbation is sufficiently smooth, then this assumption is guaranteed as long as the solution remains smooth, which is all that we need for Proposition \ref{prop:sqg}. 
	\item \textbf{Quantitative stability.} Based on the stability statement, one can prove an estimate on the traveling speed for the perturbation, which is similar to Proposition \ref{prop:travel-speed_lamb} with the same order of the error $O(\varep^{1/3})$ in the right hand side of \eqref{eq:travel-est_lamb}. (While it does not seem to be stated in \cite{CQZZ-stab}, one can prove that the profile $\tht_L^{(\alp)}$ is given in the form $g^{(\alp)}(r)\sin(\tht)$ in polar coordinates with some function $g^{(\alp)}$ satisfying $(g^{(\alp)})'(1)<0$.) In this proof, we would like to control the difference $\nrm{ u_L^{(\alp)} - u }_{L^\infty}$ under the assumptions $\nrm{ \tht_L^{(\alp)} - \tht }_{L^1\cap L^2} \ll 1$ and $\nrm{ \tht}_{L^{\infty}} \lesssim 1$; this is where the restriction $1/2 < \alp < 1$ comes in. Here, $u_L^{(\alp)}$ and $u$ are the velocities for the $\alp$-SQG equation corresponding to $\tht_L^{(\alp)}$ and $\tht$, respectively.  
	\item \textbf{Perturbed trajectories and instability.} Upon having the estimate for the traveling speed of the perturbation, one can obtain bounds for the particle trajectories for the perturbed initial data and conclude instability by the exact same argument used in Section 2.5. Here, the key is again that we have $\nrm{ u_L^{(\alp)} - u }_{L^\infty} \ll 1$. 
\end{itemize}

\section*{Acknowledgement}

\noindent 
KC has been supported by the National Research Foundation of Korea (NRF-2018R1D1A1B07043065) and by the UBSI Research Fund(1.219114.01) of UNIST.
 IJ has been supported by the Samsung Science and Technology Foundation under Project Number SSTF-BA2002-04. We thank T. Drivas and T. Elgindi for helpful discussions. 

\ \\ 
\bibliographystyle{abbrv}

\end{document}